\theoremstyle{plain}
\newtheorem{thm}{Theorem}[section]
\newtheorem{lemma}[thm]{Lemma}
\newtheorem{prop}[thm]{Proposition}
\newtheorem{cor}[thm]{Corollary}
\theoremstyle{definition}
\newtheorem*{definition}{Definition}
\theoremstyle{remark}
\newtheorem{remark}[thm]{Remark}
\newtheorem*{thank}{{\bf Acknowledgments}}
\newtheorem*{example}{Example}
\def\image{\text{Im }}
\def\diag{\text{diag}}
\def\Hom{\text{Hom}}
\def\det{\text{det}}
\def\Gm{\mathbb{G}_m}
\def\R{\mathbb{R}}
\def\Q{\mathbb{Q}}
\def\Z{\mathbb{Z}}
\def\C{\mathbb{C}}
\def\N{\mathbb{N}}
\def\X{\times}
\def\GL{\text{GL}}
\def\SL{\text{SL}}
\def\Pr{\text{Pr }}
\def\hw{\text{highest weight }}
\def\X{\times}
\def\1{\mathbbm{1}}
\def\ch{\text{characteristic}}
\def\<{\langle}
\def\>{\rangle}
\def\id{{\rm Id}}
\def\TS{\text{TS} }
\newcommand{\bigslant}[2]{{\raisebox{.2em}{$#1$}\left/\raisebox{-.2em}{$#2$}\right.}}
\def\l{\lambda}
\def\xx{\otimes}
\title{Asymptotic growth of trivial summands in tensor powers}
\author{Nai-Heng Sheu}
\address{ Department of Mathematics, Indiana University, Bloomington, IN 47405,
 U.S.A.}
\email{naihsheu@iu.edu}
\begin{document}

\begin{abstract}
    Given a finite-dimensional representation $V$ over an algebraically closed field of an abstract group $G$, we consider the number of the trivial summand counted with multiplicity in the direct sum decomposition of $V^{\xx n}$. We give necessary and sufficient conditions when the field is of characteristic $0$ and when the field is of characteristic $p$ so that $(V^{\xx n})_n$ has a subsequence $(V^{\xx n_k})_k$ such that $V^{\xx n_k}$ contains enough trivial summands when $k$ is sufficiently large.
\end{abstract}

\maketitle

\section{introduction}

Given $G$ a group and $V$ a representation of $G$, one may consider the sequence $(V^{\xx n})_n$. In \cite{coulembier2024growth}, Coulembier, Ostrik, and Tubbenhauer consider the number of indecomposable summands in $V^{\xx n}$ counted with multiplicity, denote this number by $b^{G, V}_n$, and show that $\lim_{n \to \infty} \sqrt[n]{{b^{G, V}_n}}=\dim V$. In \cite{larsen2024boundsmathrmsl2indecomposablestensorpowers}, Larsen observes that the growth rates of the multiplicity of summands with low highest weight where $G = \SL_2$ over an algebraically closed field of characteristic $2$, and where $V$ is the natural $2$-dimensional representation of $G$, are much slower than that in the characteristic zero setting, but still have an exponential growth. On the other hand, as a result of the discussion of the fusion graph in \cite{larsen2024boundsmathrmsl2indecomposablestensorpowers}, the multiplicity of trivial summands in the representation $V^{\xx n}$ where $V$ is the vector space of dimension $2$ over an algebraically closed field of characteristic $2$ as the natural representation of $\SL(V)$ is $0$ when $n>0$. Therefore, the exponential growth rate $d(\SL(V), V)$, defined in the next paragraph, is 0. This result separates the behaviors in the characteristic zero case and in the positive characteristic case and illustrates the main theorems in this paper.

Let $\rho: G \to \GL(V)$ be a finite-dimensional representation of $G$ over an algebraically closed field $K$. Let $\TS_G(V)$ denote the number of trivial summands in $V$ when $V$ is written as a direct sum of indecomposable representations of $G$. We are interested in the asymptotic behavior of $\TS_G(V^{\xx n})^{1/n}$. The limit $\lim_{n \to \infty} \TS_G(V^{\xx n})^{1/n}$ does not always exist but $\limsup_{n \to \infty}\TS_G(V^{\xx n})^{1/n}$ always exists. Denote $\limsup_{n \to \infty}\TS_G(V^{\xx n})^{1/n}$ by $d(G, V)$. Since $\TS_G(V^{\xx n})^{1/n} \le \dim V$,  $d(G, V)\le \dim V$. 

Keep the notations and assumptions in the previous paragraph, our main results as corollaries of Theorem \ref{MainTheorem} and Theorem \ref{chi finite first time} are the following:

\begin{thm} \label{main_char_zero}
    When $K$ is of characteristic zero, $d(G, V)=\dim V$ if and only if the determinant map takes finitely many values on $\rho(G).$ 
\end{thm}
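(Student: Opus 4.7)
The plan is to pass from $G$ to the Zariski closure $H := \overline{\rho(G)} \subseteq \GL(V)$, a linear algebraic group. Since stabilizers of subspaces and spaces of equivariant maps in $\GL(V)$ are Zariski closed, the Krull--Schmidt decompositions of $V^{\xx n}$ as $G$- and as $H$-modules coincide, so $\TS_G(V^{\xx n}) = \TS_H(V^{\xx n})$. Also $\det(\rho(G))$ is finite iff $\det(H)$ is, since $\det(H) \subseteq \Gm$ is the Zariski closure of $\det(\rho(G))$ and a closed subgroup of $\Gm$ is either finite or all of $\Gm$. So one may work entirely with the algebraic group $(H, V)$.

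For sufficiency, assume $\det(H)$ has finite order $m'$. Then $\det(H^0)$ is a connected subgroup of a finite group, hence trivial, so $H^0 \subseteq \SL(V)$. I would then invoke the Schur--Weyl decomposition
\[
V^{\xx n} = \bigoplus_{\lambda \vdash n,\ \ell(\lambda) \le d} S^\lambda V \xx M^\lambda,
\]
with $d := \dim V$, $S^\lambda V$ the Schur functor, $M^\lambda$ the Specht module. For the rectangle $\lambda = (k^d) := (k,k,\ldots,k)$ (so $n=dk$), $S^{(k^d)} V \cong (\wedge^d V)^{\xx k} \cong \det^k$ is one-dimensional and trivial on $H$ whenever $m' \mid k$; the corresponding isotypic piece $(S^{(k^d)} V)^{\oplus \dim M^{(k^d)}}$ is then a trivial $H$-summand of $V^{\xx n}$ of dimension $\dim M^{(k^d)}$. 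The hook-length formula gives $\dim M^{(k^d)} \sim C_d\, d^{dk}/k^{d(d-1)/2}$ as $k \to \infty$, so along the subsequence $n = dm'k'$ we obtain $\TS_H(V^{\xx n})^{1/n} \to d$, whence $d(G, V) = d$.

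For necessity, assume $\det(H)$ is infinite, so $\det(H^0) = \Gm$ by the same connected-subgroup dichotomy. In characteristic zero, fix a Levi decomposition $H^0 = L \ltimes R_u$ (Mostow). Since unipotent groups carry no non-trivial characters, $\det$ kills $R_u$ and descends to a non-trivial character of $L$; restriction to a maximal torus $T \subseteq L$ gives a non-zero $\chi := \det|_T \in X^{\ast}(T)$. If $\lambda_1,\ldots,\lambda_d$ are the $T$-weights of $V$ (with multiplicity), then $\chi = \lambda_1 + \cdots + \lambda_d$, so the mean $\bar\lambda := \chi/d$ is non-zero in $X^{\ast}(T)_\Q$. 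Since $T \subseteq H$,
\[
\dim (V^{\xx n})^H \le \dim (V^{\xx n})^T = \#\bigl\{(i_1,\ldots,i_n) \in [d]^n : \lambda_{i_1} + \cdots + \lambda_{i_n} = 0\bigr\}.
\]
Choosing a linear $\phi \colon X^{\ast}(T)_\R \to \R$ with $\phi(\bar\lambda) \ne 0$, a Chernoff bound on i.i.d.\ real-valued samples of $\phi$ on the weight multiset (mean $\phi(\bar\lambda) \ne 0$) bounds the right-hand side by $(d - \varepsilon)^n$ for some $\varepsilon > 0$. Therefore $d(G,V) \le \limsup (\dim (V^{\xx n})^H)^{1/n} < d$.

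The main obstacle is the non-reductive case in the necessity direction: the Levi--Mal'cev decomposition and the absence of characters on unipotent groups---both characteristic-zero phenomena---are essential to reduce to a torus, where the large-deviations estimate then has to quantify the exponential gap between the weight-zero count in $V^{\xx n}$ and $d^n$. The sufficiency direction is comparatively routine once Schur--Weyl is invoked, the key numerical input being the hook-length asymptotics giving $d^n$ up to polynomial corrections for the rectangular Specht module.
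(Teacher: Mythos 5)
Your argument is correct. The necessity direction is essentially the paper's own proof of Theorem \ref{chi infinite}: the paper also reduces to a subtorus on which $\det$ restricts to a nonzero character (it gets there via Borel's covering theorem and a rank-one subtorus rather than a Levi decomposition, but this is an interchangeable step) and then bounds the zero-weight multiplicity of $V^{\xx n}$ by Bernstein's inequality, which is exactly your Chernoff estimate after composing the weights with a linear functional. The sufficiency direction, however, is genuinely different. The paper works inside $\SL(V)$ and produces trivial summands indirectly: a concentration argument locates an irreducible $V_\lambda$ with $\lambda$ close to the mean and multiplicity at least $Cm^n/n^k$ (Lemma \ref{V_l}), its dual is found in $V^{\xx n'}$ with $n'=O(n^{2/3})$ (Lemma \ref{dualrep}), and the trivial summands come from $V_\lambda\xx V_\lambda^*$; the passage from $\SL(V)$ to a group with finite determinant image then goes through the auxiliary group $Z\SL(V)$ with $Z$ a finite group of scalars (Proposition \ref{chi finite}). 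You instead read the trivial summands off directly from Schur--Weyl duality: the $\det^{k}$-isotypic piece of $V^{\xx dk}$ yields $\dim M^{(k^d)}$ trivial summands for the closure $H$ as soon as $m'\mid k$, and the hook-length formula supplies the growth $d^{n}/\mathrm{poly}(n)$ with no probabilistic input, no auxiliary tensor factor $V^{\xx n'}$, and a one-line treatment of the finite-determinant case. What the paper's route buys in exchange is reusable machinery: the close-to-the-mean summand, the dual-pairing trick, and the weight-zero space all reappear in Remark \ref{ReduceToLimit} and in the positive-characteristic argument of Theorem \ref{chi finite first time}. One small point to tighten: the exponent in your hook-length asymptotic for the $d\times k$ rectangle is not exactly $d(d-1)/2$ once the Stirling corrections are tracked, but any polynomial denominator suffices for $\bigl(\dim M^{(k^d)}\bigr)^{1/dk}\to d$, so the conclusion stands.
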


\begin{thm} \label{main_char_p}
        When $K$ is of positive characteristic, $d(G, V)$ is equal to $\dim V$ if and only if the determinant map takes finitely many values on $\rho(G)$ and $H=\{e\}$ where $H$ is the set of unipotent elements of the Zariski closure of $\rho(G)$ in $\GL(V)$. 
\end{thm}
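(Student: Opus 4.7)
The plan is to first reduce to the Zariski closure: setting $\bar{G} = \overline{\rho(G)} \subset \GL(V)$, the $G$-submodule and $\bar{G}$-submodule structures on $V^{\xx n}$ agree, so the Krull--Schmidt decompositions agree and $\TS_G(V^{\xx n}) = \TS_{\bar{G}}(V^{\xx n})$. Both conditions of the theorem depend only on $\bar{G}$, so throughout I would assume $G$ is a closed algebraic subgroup of $\GL(V)$.

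For sufficiency, assume $\det(G)$ is finite and $G$ has no nontrivial unipotent element. I would first show $G$ is linearly reductive: the connected component $G^\circ$ is unipotent-free, so (since semisimple algebraic groups always contain nontrivial unipotents) $G^\circ$ is a torus. Moreover, every element of $p^a$-power order in characteristic $p$ is unipotent, so $G/G^\circ$ carries no $p$-torsion, hence is finite of order coprime to $p$. By Nagata's theorem $G$ is linearly reductive, every $G$-module is semisimple, and $\TS_G(V^{\xx n}) = \dim (V^{\xx n})^G$. Plugging this into Theorem \ref{MainTheorem} together with the $\det$-finite hypothesis yields $d(G,V) = \dim V$.

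For necessity, suppose $d(G,V) = \dim V$ and consider the two conditions separately. That $\det(G)$ must be finite follows from the same mechanism used in characteristic zero: Theorem \ref{MainTheorem} or Theorem \ref{chi finite first time} supplies a bound on invariant growth that forces $d(G,V) < \dim V$ when $\det(G)$ is infinite. For the absence of nontrivial unipotents, suppose $u \in G$ is a nontrivial unipotent. Refining any $G$-decomposition of $V^{\xx n}$ to a $\langle u \rangle$-decomposition shows $\TS_G(V^{\xx n}) \le \TS_{\langle u \rangle}(V^{\xx n})$. Decomposing $V = J_1^{\oplus y} \oplus W$ as a $\langle u \rangle$-module, where $W$ is the sum of indecomposable summands of dimension $\ge 2$ (necessarily nonzero and of total dimension $w \ge 2$), expanding gives
\[
\TS_{\langle u \rangle}(V^{\xx n}) \;=\; \sum_{k=0}^{n} \binom{n}{k}\, y^{\,n-k}\, \TS_{\langle u \rangle}(W^{\xx k}),
\]
and the estimate reduces to showing $\limsup_k \TS_{\langle u \rangle}(W^{\xx k})^{1/k} < w$. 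Combined with the binomial sum this yields $d(G,V) \le y + \lambda < y + w = \dim V$ for some $\lambda < w$, the desired contradiction.

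The main obstacle I anticipate is precisely the strict inequality $\limsup_k \TS_{\langle u \rangle}(W^{\xx k})^{1/k} < \dim W$ when $W$ has no trivial summand. Although the Coulembier--Ostrik--Tubbenhauer result shows that the total indecomposable growth of $W^{\xx k}$ is $\dim W$, isolating the multiplicity of the trivial block $J_1$ requires a spectral analysis of the multiplication-by-$[W]$ operator on the finite-dimensional Green ring of the cyclic $p$-group $\langle u \rangle$; the leading eigenvector should concentrate on larger Jordan blocks, forcing the $J_1$-coefficient to grow at a strictly smaller exponential rate. Once this key Green-ring estimate is in place, the equivalence in Theorem \ref{main_char_p} follows by combining these reductions with Theorems \ref{MainTheorem} and \ref{chi finite first time}.
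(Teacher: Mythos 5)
Your necessity direction is essentially the paper's route: the determinant obstruction is handled by the torus/random-walk bound (though the correct reference is Theorem \ref{chi infinite}, which is characteristic-independent, not Theorem \ref{MainTheorem} or \ref{chi finite first time}), and the unipotent obstruction is handled by restricting to $\langle u\rangle$ and doing a Green-ring/stochastic-matrix analysis. The estimate you flag as the "main obstacle" --- that $\limsup_k \TS_{\langle u\rangle}(W^{\xx k})^{1/k} < \dim W$ when $W$ has no trivial summand --- is exactly what the paper proves in Proposition \ref{d(G, V_1)} and Theorem \ref{d(P, V)} via the matrix $P(T^{p-1})$, whose last row is strictly positive so that mass drains into the projective block $V_{p-1}$; the paper also first replaces $u$ by a power of exact order $p$, which keeps the Green ring small. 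So that half is a correct outline with the key lemma deferred but correctly identified.

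The sufficiency direction, however, has a genuine gap. Your reduction to linear reductivity (via Nagata: $G^\circ$ a torus, $G/G^\circ$ finite of order prime to $p$) is sound and matches Proposition \ref{orderprimetop}, and it does give $\TS_G(V^{\xx n}) = \dim (V^{\xx n})^G$. But "plugging this into Theorem \ref{MainTheorem}" does not finish the proof: Theorem \ref{MainTheorem} is a statement about direct summands whose proof runs through the characteristic-zero representation theory of $\SL(V)$ (Weyl dimension formula, Pieri's rule), and the quantity it controls, $\TS_{\SL(V)}(V^{\xx n})$, can collapse entirely in characteristic $p$ (e.g.\ it is $0$ for all $n>0$ when $\dim V = p = 2$). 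What you actually still need is a lower bound on $\dim (V^{\xx n})^G$ in characteristic $p$ growing like $(\dim V)^n$, and nothing in your sketch supplies it. This is the bulk of the paper's proof of Theorem \ref{chi finite first time}: after reducing to $G\subset \SL(V^{\xx k})$, one passes to the weight-zero space $W_n \subset \widetilde W_n = (V^{\xx mn})^T$ of a maximal diagonal torus, whose dimension is a purely combinatorial (characteristic-independent) multinomial count; one then shows $\widetilde W_n$ is a semisimple module for the finite quotient $\Sigma = G/T$ of order prime to $p$, transfers to $\C$ via $R_K(\Sigma)=R_\C(\Sigma)$, and uses the faithful-representation and regular-representation lemmas of Appendix B to extract $\Sigma$-invariants of the right exponential order. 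Without some substitute for this step (for instance, a good-filtration argument showing $\dim(V^{\xx n})^{\SL(V)}$ is the same in characteristic $p$ as in characteristic $0$, which is itself a nontrivial fact you would have to prove), the sufficiency half of your argument does not close.
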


In \cref{sec: notations}, we set up the notations and the conventions that will be used throughout the entire article. In \cref{sec: general}, we reduce the condition that $G$ is an abstract group to the condition that 
$G$ is a Zariski closed subgroup of $\GL(V)$ for some finite-dimensional vector space $V$. We also discuss the results that are independent of the characteristic of the field $K$. In \cref{sec: char 0}, we discuss the results where $K$ is of characteristic zero. We start with the special linear groups and extend the result to the representations on which the determinant map takes finitely many different values. In \cref{sec: char p}, we discuss the results when $K$ is of positive characteristic. In \cref{appendix: decomp}, we give a formula for the decompositions of tensor products of modular representations of $\Z/p\Z$ and in \cref{appendix: regular}, we give some observations on the regular representations over a field of \ch\ zero. We use the results from \cref{appendix: decomp} and \ref{appendix: regular} in \cref{sec: char p}.

\begin{thank}
    The author would like to thank Michael Larsen for discussions, comments, and  support, and Noah Snyder for the suggestion and discussions. 
    
\end{thank}


\section{notations and conventions} \label{sec: notations}

Let $G$ be an arbitrary group. In this article, all representations we consider are finite-dimensional over $K$ where $K$ is an algebraically closed field.

Let $\rho: G \to \GL(V)$ be a  representation of $G$ over $K$.  Let $\TS_G(V)$ denote the number of trivial summands in $V$ counted with multiplicity when we  write $V$ as a direct sum of indecomposable representations of $G$. If $H$ is a subgroup of $G$, then $\TS_H(V)$ is the number of trivial summands in $V$ as a restricted representation of $H$. It is clear that $\TS_H(V)\ge \TS_G(V)$.  When there is no ambiguity,  we may drop the subscript $G$. We denote $\limsup_{n \to \infty}\TS_G(V^{\xx n})^{1/n}$ by $d(G, V)$.

By a slight abuse of terminology which should not cause confusion, we identify a closed subvariety of $\GL_m(K)$ with its set of points over $K$. By saying the Zariski closure of a subset of $\GL_m(K)$, we mean the smallest closed subvariety containing that set, and by saying that $G$ is an algebraic group we mean $G$ is a subgroup and a closed subvariety of $\GL_m(K)$ for some $m$.


\section{general case} \label{sec: general}

Given an abstract group $G$ and $\rho: G \to \GL(V)$ a finite-dimensional representation of $G$. The following proposition allows us to assume $G$ is a closed subgroup of $\GL(V)$ and $V$ is a representation of $G$ in the natural way.

 \begin{prop}
     Let $\rho: G \to \GL(V)$ be a finite-dimensional representation of $G$ and $\overline{G}$ be the Zariski closure of $\rho(G)$ in $\GL(V)$. Then $$d(G, V)=d( \overline{G}, V).$$
 \end{prop}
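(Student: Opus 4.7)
The plan is to show that $\TS_G(V^{\xx n}) = \TS_{\overline{G}}(V^{\xx n})$ for every $n$, from which the proposition follows by taking $\limsup$ of $n$-th roots. Since the $G$-action on $V^{\xx n}$ factors through $\rho(G)$, the lattice of $G$-subrepresentations coincides with the lattice of $\rho(G)$-subrepresentations, so one may immediately replace $G$ by $\rho(G)$ and assume $G$ is a (possibly non-closed) subgroup of $\GL(V)$ sitting inside $\overline{G}$.

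The crux is a Zariski-closure principle. Let $\rho_n \colon \overline{G} \hookrightarrow \GL(V) \to \GL(V^{\xx n})$ be the $n$-th tensor power, a morphism of algebraic groups. For any subspace $U \subset V^{\xx n}$, the set $\{g \in \GL(V) : \rho_n(g) U \subset U\}$ is cut out by polynomial equations (in a basis of $V^{\xx n}$ adapted to $U$, the off-block entries of $\rho_n(g)$ must vanish), so it is Zariski closed in $\GL(V)$. If it contains $G$, it contains $\overline{G}$. Thus the collections of $G$-invariant and $\overline{G}$-invariant subspaces of $V^{\xx n}$ coincide. An entirely parallel argument, applied to the linear condition ``$\phi \circ \rho_n(g) = \rho_n(g) \circ \phi$'' for a fixed $\phi$, shows that $\Hom_G(V^{\xx n}, V^{\xx n}) = \Hom_{\overline{G}}(V^{\xx n}, V^{\xx n})$; in particular the two rings contain the same idempotents, so direct sum decompositions of $V^{\xx n}$ into $G$-subrepresentations agree with those into $\overline{G}$-subrepresentations.

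Consequently, the Krull--Schmidt decompositions of $V^{\xx n}$ as a $G$-module and as an $\overline{G}$-module agree as decompositions of vector spaces, and corresponding summands are isomorphic as modules over the two groups in the following sense: the indecomposability of each factor is determined by the ring of equivariant endomorphisms, which we have seen is the same for $G$ and $\overline{G}$. It remains to verify that a one-dimensional summand is trivial as a $G$-module iff it is trivial as an $\overline{G}$-module; this is once again the closure principle, applied to the pointwise stabilizer $\{g \in \GL(V) : \rho_n(g) v = v\}$ of a generating vector $v$, which is Zariski closed. Assembling these pieces gives $\TS_G(V^{\xx n}) = \TS_{\overline{G}}(V^{\xx n})$ for every $n$, and hence $d(G, V) = d(\overline{G}, V)$.

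The argument is essentially formal; there is no real obstacle beyond bookkeeping. The point I would be most careful about is making sure every structure needed to detect trivial summands (set-wise stabilizers of subspaces, centralizers of endomorphisms, and pointwise stabilizers of vectors) is cut out by a Zariski-closed condition in $\GL(V)$, so that the defining inclusion $G \subseteq \overline{G}$ transports it unchanged.
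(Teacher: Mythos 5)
Your proposal is correct and rests on the same core mechanism as the paper's proof: every relevant invariance condition (stabilizing a subspace, fixing a vector, commuting with a fixed endomorphism) is Zariski closed in $\GL(V)$, so it transfers from $\rho(G)$ to $\overline{G}$, giving $\TS_G(V^{\xx n}) = \TS_{\overline{G}}(V^{\xx n})$ for all $n$. The only stylistic difference is that you organize the argument around the equality $\Hom_G(V^{\xx n}, V^{\xx n}) = \Hom_{\overline{G}}(V^{\xx n}, V^{\xx n})$ and its idempotents, whereas the paper works directly with the isotropy subgroup of a pair consisting of a complementary subrepresentation $W$ and a fixed vector $v$; both are valid instances of the same closure principle.
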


\begin{proof}
Let $V$ be an $m$-dimensional vector space over a field $K$. Let $W$ be a $d$-dimensional subspace of $V$. Let $\{v_1,\cdots,v_m \}$ be an ordered and linearly independent set of $V$ where $\{v_i\}_{i=1}^d$ forms a basis of $W$. After fixing a basis of $V$, the set $\{v_1,\cdots,v_m\}$ gives an element of $\GL_m(K)$. Each element of the subgroup $\GL_d(K)\times I_{m-d}$ acts on $\GL_m(K)$ via the right multiplication and sends the set $\{v_1,\cdots,v_m \}$ to $\{v'_1,\cdots,v'_d,v_{d+1}, \cdots, v_m\}$ for some $\{v'_i\}_{i=1}^d$ which forms a basis of $W$. On the other hand, let $\{v'_i\}_{i=1}^d$ be a basis of $W$; there exists a unique element of $\GL_d(K)$ sending $\{v_i\}_{i=1}^d$ to $\{v'_i\}_{i=1}^d$. Therefore, a pair $(W, \{v_{d+1}, \cdots, v_m\})$ is identified as a single orbit of the invertible matrix, which is given by the set $\{v_1,\cdots,v_m\}$,  under the action of $\GL_d(K)$, hence, an element of the quotient variety $\GL_m(K)/\GL_d(K)$. 

An element $x\in \GL_m(K)/\GL_d(K)$ gives a morphism $$\phi_x: \GL_m(K) \to \GL_m(K)/\GL_d(K)$$ by $g \mapsto gx$. Suppose $x$ is identified as $(W, \{v_{d+1}, \cdots, v_m\})$. The preimage of $x$ under $\phi_x$ is a closed subgroup of $\GL_m(K)$ and it consists of the elements in $\GL_m(K)$ that stabilize $W$ and fix the vector $v_i$ for $d<i \le m$; hence, it is the isotropy subgroup of the pair $(W, \{v_{d+1}, \cdots, v_m\})$ in $\GL_m(K)$.

Now suppose $\rho: G \to \GL(V)$ is a representation of $G$ and $v$ is a nonzero vector in a trivial summand of $V$ as a representation of $G$. Let $W$ be a $G$-subepresentation of $V$ that is complement of $Kv$ in $V$. Since $\rho(G)$ fixes $v$ and stabilizes $W$, $\rho(G)$ fixes the pair $(W, v)$. Therefore, the Zariski closure of $\rho(G)$ in $\GL(V)$ is contained in the isotropy subgroup of $(W, v)$. Denote the Zariski closure of $\rho(G)$ by $\overline{G}$. It shows that $\TS_{\overline{G}}(V)\ge \TS_G(V)$. 
On the other hand, since $\rho(G)$ is a subgroup of $\overline{G}$, one has $\TS_G(V) \ge \TS_{\overline{G}}(V)$. Therefore, $\TS_G(V)= \TS_{\overline{G}}(V)$.

Every natural number $n$ gives a morphism $\tau_n: \GL(V)\to \GL(V^{\xx n})$ and it induces a morphism $\rho_n: G \to \GL(V^{\xx n})$. By a similar argument, one has $\TS_G(V^{\xx n })= \TS_{\overline{G}}(V^{\xx n}).$ 
\end{proof}

\begin{thm} \label{chi infinite}
    Let $V$ be an $m$-dimensional vector space over $K$ where $K$ is algebraically closed. Let $G$ be a closed subgroup of $\GL(V)$. 
    If $\image \left.\det\right|_{G^\circ}$ is of infinite order, then $d(G, V)<m$.
\end{thm}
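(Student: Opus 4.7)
My plan is to bound $\TS_G(V^{\xx n})$ by the $T$-invariants of $V^{\xx n}$ for a maximal torus $T \subseteq G^\circ$, and then to estimate the latter via a generating function. Since every direct sum of trivial summands sits inside $(V^{\xx n})^G \subseteq (V^{\xx n})^T$, if $\chi_1, \ldots, \chi_m$ denote the weights (with multiplicity) of $V|_T$, then
\[
\TS_G(V^{\xx n}) \;\le\; \dim (V^{\xx n})^T \;=\; \#\bigl\{(i_1, \ldots, i_n) \in \{1, \ldots, m\}^n : \chi_{i_1} + \cdots + \chi_{i_n} = 0\bigr\}.
\]

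Next I would verify that $\det|_T$ is a nontrivial character of $T$, equivalently $\sum_i \chi_i \ne 0$ in the character lattice of $T$. The image of $\det|_{G^\circ}$ is a closed connected subgroup of $\Gm$; the only such subgroups are $\{1\}$ and $\Gm$ itself, so the infinite-image hypothesis forces $\det|_{G^\circ}: G^\circ \twoheadrightarrow \Gm$. The unipotent radical $U \triangleleft G^\circ$ lies in $\ker\det$, so $\det$ descends to a nontrivial character of the reductive quotient $G^\circ/U$. In a reductive group every character is trivial on the derived subgroup and hence determined by its restriction to any maximal torus; applied to the image of $T$ in $G^\circ/U$ (which is a maximal torus there, since $T \cap U = \{1\}$ and both groups have the same rank), this gives $\det|_T \ne 1$.

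For the counting estimate, pick a cocharacter of $T$ pairing nontrivially with $\sum_i \chi_i$, and let $a_i \in \Z$ denote the integer values of the $\chi_i$ under this cocharacter, so $\sum_i a_i \ne 0$. Define $f(x) := x^{a_1} + \cdots + x^{a_m}$. The tuple count above is majorized by the number of tuples with $\sum_j a_{i_j} = 0$, which is exactly the constant term $[x^0] f(x)^n$. For any $r > 0$, Cauchy's estimate yields $|[x^0] f(x)^n| \le g(r)^n$ with $g(r) := r^{a_1} + \cdots + r^{a_m}$. Since $g(1) = m$ while $g'(1) = \sum_i a_i \ne 0$, the point $r = 1$ is not a local extremum of $g$, so some $r > 0$ achieves $g(r) < m$; combining the bounds gives $d(G, V) \le g(r) < m$.

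The main obstacle is the structural step, where one must descend $\det$ through the unipotent radical and invoke that characters of reductive groups restrict nontrivially to a maximal torus; the other steps are essentially formal. The counting bound in the final paragraph has the interpretation of a large deviation estimate, since $m^{-n} [x^0] f(x)^n$ is the return probability of a random walk on $\Z$ with step distribution $\{a_1, \ldots, a_m\}$, and for nonzero mean such a walk returns to the origin with exponentially small probability.
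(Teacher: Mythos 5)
Your proof is correct and follows essentially the same strategy as the paper: reduce to a (sub)torus of $G^\circ$ on which $\det$ is nontrivial (using that unipotent elements have determinant $1$), then bound the weight-zero multiplicity of $V^{\otimes n}$ by an exponentially small return probability for a random walk with nonzero drift. The only differences are cosmetic: the paper gets the nontrivial character via Borel subgroups and a one-dimensional subtorus rather than the reductive quotient, and it proves the tail bound with Bernstein's inequality rather than your (equally valid, and arguably more self-contained) Chernoff-type bound $[x^0]f(x)^n \le g(r)^n$ with $g(r)<m$ for some $r$ near $1$.
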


\begin{proof}
    We may assume $G$ is connected since the connected component $G^\circ$ is of finite index and hence $\image \left.\det\right|_{G^\circ}$ is of infinite order. By \cite[Theorem 11.10]{borel2012linear}, $G$ is a union of Borel subgroups. Since any two Borel subgroups are conjugate to each other, and the determinant is determined up to conjugation, we may assume $\image \left.\det\right|_B$ is of infinite order. For any Borel subgroup $B$, $B=T \ltimes U$, where $T$ is a maximal torus and $U$ is the unipotent subgroup of $B$, the determinant of a unipotent element is $1$; therefore, the dimension of $T$ is at least $1$ and $\det(T) \supsetneq \{1\}$. Hence, $T$ contains a $1$-dimensional torus $S$. Since $K$ is algebraically closed, we may assume 
\begin{multline*}
S= \{\diag(t^{k_1}, \cdots, t^{k_m})\mid t \in K \ \text{for some }k_1, \cdots, k_m \\
\text{ that sum up to a nonzero number}\}.
\end{multline*}

    Since $d(S, V) \ge d(T, V)\ge d(G, V)$, it suffices to show that $d(S, V)< m$. Replace $T$ by $S$.

    Write $V=K v_1\oplus \cdots \oplus K v_m$ where $\diag(t^{k_1}, \cdots, t^{k_m})  v_i= t^{k_i} v_i$ and $v_i$ is of weight $k_i$ for the torus $T$. Let $v_{a_1}\xx  \cdots \xx v_{a_n}$ be an element of $V^{\xx n}$ and $n_i$ be the number of times that $i$ occurs in $\{a_k\}_{k=1}^n$. Then $v_{a_1}\xx \cdots \xx v_{a_n}$ is of weight $\sum_{i=1}^m n_i k_i$ for the torus $T$.

    Consider a random walk in $\Z$ starting from the origin. For each step, there are $m$ possible movements $k_i$ with probability $1/m$. Let $k=\sum_{i=1}^m k_i$ where $k_i$ is given by $S$; therefore, $k>0$.  Let $X_i$ be the random variable of $i$-th step movements. The expected value of $X_i$ is $k/m$. The random variables $X_i$ are i.i.d. Let $S_n=\sum_{i=1}^n X_i$. The expected value of $S_n$ is $nk/m$.

    Since $\TS_T(V^{\xx n})\le m^n \Pr[S_n=0]$ and $$\Pr[S_n=0] \le \Pr[S_n-  \frac{nk}{m} < -\frac{nk}{m+1}],$$ it suffices to show that $\Pr[S_n-  \frac{nk}{m} < -\frac{nk}{m+1}]$ is sufficiently small. 

    Let $Y_i= k/m-X_i$ and $Z_n=\sum_{i=1}^n Y_i$. To show that $\Pr[S_n-  \frac{nk}{m} < -\frac{nk}{m+1}]$ is sufficiently small is equivalent to show that $\Pr[Z_n > \frac{nk}{m+1}]$ is sufficiently small.  

    Let $b$ be a positive number greater than all the possible values of $Y_i$. Let $v=\sum_{i=1}^n E[Y_i^2]$.
    By Bernstein's inequality \cite[Equation 2.10]{boucheron2013concentration}, $$\Pr[Z_n > t] < e^{-t^2/2(v+bt/3)}.$$
    
    Let $t=nk/(m+1)$, then $t^2/2(v+bt/3)>Cn$ for some $C>0$ when $n>N$ for some $N>0$. Therefore, $$\Pr[Z_n > t] < e^{-Cn}$$ for some $C>0$, and  $$ m^n \Pr[Z_n > t] < m^n e^{-Cn}.$$ Hence, $\TS_T(V^{\xx n})^{1/n}< m e^{-C}$ for some $C>0$  when $n>N$. Therefore, $d(G, V)< d(T, V) < m.$
    
\end{proof}


\section{Characteristic $0$ case} \label{sec: char 0}

\subsection{Asymptotic behavior of the special linear group}

Let $V$ be an $m$-dimensional vector space over $K$ where $K$ is algebraically closed and of \ch\ zero. We are interested in the asymptotic behavior of $\limsup_{n \to \infty} \TS_G(V^{\otimes n})^{1/n}$ where $G$ is $\SL(V)$. We begin by fixing notation and stating several results from weight theory that will be used in the proof of Theorem \ref{theoremForSL}.

Let $e_i$ be the $i$-th coordinate vector of $\Z^m$. Express the weights of $G$ as $[\l]$ where $\lambda=(\lambda_1, \ldots, \lambda_m)$, i.e. a weight is an element of $\bigslant{\Z^ m}{\Z(e_1 +\cdots +e_m)}$. A weight $[\lambda]$ is dominant if and only if $\lambda_1 \ge \lambda _2 \ge \cdots \ge \lambda_m$. Moreover, one can choose a representative of $[\l]$ such that every coordinate is non-negative. Hence, a dominant weight corresponds to one $m$-part partition (a weakly decreasing $m$-tuple of non-negative integers) of some integer.

Given a dominant weight $[\l]$, we denote the irreducible representation of \hw $[\l]$ by $V_\l$.
The natural representation $V$ is isomorphic to $V_{e_1}$. We want to know the decomposition of $V_\l \otimes V_{e_1}$.

Let $\Tilde{\l}$ be a representative of $[\l]$ as a partition. By the Littlewood-Richardson rule (or Pieri's formula for this special case),

\[ V_\l \otimes V \cong \bigoplus_\mu  V_\mu,\]
where the sum is over all partitions $\mu$ obtained by adding 1 to one coordinate of $\Tilde{\l}$. 

As a result and by induction on $n$,  $V_\l$ is a summand of $V^{\xx n}$ if and only if $[\l]$ has a representative that is a partition of $n$.\\

Let $\rho$ be half the sum of the positive roots, and $\l=(\l_1, \ldots, \l_m)$ be a partition of $n$. By the Weyl dimension formula,

\[ \dim V_\l =\prod_{\alpha \in R^+}\frac{  (\l + \rho, \alpha) }
{ (\rho, \alpha)  }=\prod_{1 \le i<j \le m}\frac{(\l_i-\l_j)+(j-i)}{j-i}\le \prod_{1 \le i<j \le m}\frac{n+(j-i)}{j-i} \]
where $R^+$ is the set of positive roots. Therefore, $\dim V_\l \le P(n)$ for some polynomial $P(x)$ of degree $\lvert R^+\rvert =\frac{m(m-1)}{2}.$
\\


Any weight $[\l]$ of $V^{\xx n}$ has a unique representative whose coordinates sum up to $n$.

\begin{definition}
    Given an $m$-tuple $x=(x_1, \ldots x_m)$ of non-negative numbers such that $\sum_{i=1}^m x_i=n$, we say $x$ is \emph{close to the mean} if $\vert x_i - n/m \vert < n^{2/3}$ for all $i$; otherwise, we say $x$ is not close to the mean. We say a weight $[\l]$ of $V^{\xx n}$ is \emph{close to the mean} if the representative whose coordinates sum up to $n$ is close to the mean; otherwise, we say $[\l]$ is not close to the mean. 
\end{definition}

In the following, $\l$ always denotes an $m$-tuple with non-negative integers that sum up to $n$.

\begin{lemma}\label{dualrep}
    Let $\l$ be a partition of $n$ such that $\l$ is close to the mean. The dual representation of $V_\l$ is a direct summand of $V^{\xx n'}$ for some $n'<mn^{2/3}.$ 
\end{lemma}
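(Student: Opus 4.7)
The plan is to compute the highest weight of $V_\l^*$ explicitly, represent it as a partition with non-negative entries, and then read off the integer $n'$ such that $V_\l^*$ is a summand of $V^{\xx n'}$. The closeness of $\l$ to the mean then gives the required bound on $n'$.

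First I would recall the general fact that for $\SL_m$ (acting on $V$ of dimension $m$), the dual of $V_\l$ is the irreducible representation of highest weight $-w_0 [\l]$, where $w_0$ is the longest element of the Weyl group $S_m$ acting by reversing coordinates. Concretely, if $\l=(\l_1,\ldots,\l_m)$ is the given partition of $n$, then $V_\l^*$ has highest weight
\[
[-w_0\l] \;=\; [(-\l_m,-\l_{m-1},\ldots,-\l_1)] \;\in\; \Z^m/\Z(e_1+\cdots+e_m).
\]
To obtain a partition representative (with non-negative, weakly decreasing entries), I would add the scalar $\l_1$ to every coordinate, which is allowed because we are working modulo $\Z(e_1+\cdots+e_m)$. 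This gives the representative
\[
\mu \;=\; (\l_1-\l_m,\;\l_1-\l_{m-1},\;\ldots,\;\l_1-\l_1)\;=\;(\l_1-\l_m,\ldots,\l_1-\l_2,0),
\]
which is a partition since $\l_1\ge\l_2\ge\cdots\ge\l_m$.

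Next I would compute $n':=|\mu|=\sum_{i=1}^m(\l_1-\l_i)=m\l_1-n$. By the criterion recalled just before the lemma (a consequence of iterated application of Pieri's rule), $V_\mu=V_\l^*$ is a summand of $V^{\xx n'}$. The only remaining point is the size estimate. Since $\l$ is close to the mean, $\l_1<\tfrac{n}{m}+n^{2/3}$, so
\[
n' \;=\; m\l_1 - n \;<\; m\left(\tfrac{n}{m}+n^{2/3}\right) - n \;=\; m\,n^{2/3},
\]
which is exactly the claimed bound.

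There is no real obstacle here: the only subtlety is keeping the $\SL_m$-weight bookkeeping straight when passing from $-w_0\l$ to a partition representative, i.e.\ choosing the correct shift by a multiple of $e_1+\cdots+e_m$ so that $\mu$ has non-negative entries and its total is minimized. Once that is done correctly, $n'=m\l_1-n$ falls out immediately and the closeness hypothesis on $\l$ yields $n'<mn^{2/3}$.
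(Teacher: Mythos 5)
Your proof is correct and follows essentially the same route as the paper: identify the highest weight of $V_\l^*$ as $[-w_0\l]$, shift by $\l_1(e_1+\cdots+e_m)$ to get the partition representative $(\l_1-\l_m,\ldots,\l_1-\l_2,0)$ of size $n'=m\l_1-n$, and bound $n'<mn^{2/3}$ using $\l_1<n/m+n^{2/3}$. No issues.
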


\begin{proof}
    Given an arbitrary partition $\l=(\l_1, \ldots, \l_m)$ of $n$, the dual representation $V_\l^*$ has highest weight $[\mu]$ where if $w_0$ is the longest element of the Weyl group of $G$, then \begin{align*}
    [\mu]&=[-w_0(\l_1, \ldots, \l_m)] \\
&=[-(\l_m, \l_{m-1}, \ldots, \l_2, \l_1)]. 
\end{align*}

Adding $[\l_1 (1, \ldots, 1)]$ to $[\mu]$, then $$[\mu]=[(\l_1-\l_m, \l_1-\l_{m-1}, \ldots, \l_1-\l_2, 0)].$$ Then $$(m-1)\l_1-(\l_m+\l_{m-1}+\cdots + \l_2)=m\l_1 -(\l_m+\l_{m-1}+\cdots + \l_2+\l_1)=m\l_1 - n.$$
 Especially, when $\l$ is close to the mean, then $\l_1 - n/m<n^{2/3}$. Hence, $m\l_1 - n<mn^{2/3}$. Therefore, $V_\mu$ is a direct summand of $V^{\xx n'}$ for some $n'<mn^{2/3}$.
\end{proof}

\begin{lemma}\label{V_l}
    When $n$ is large enough, there is a direct summand $V_\l$ of $V^{\otimes n}$ where $\l$ is close to the mean and the multiplicity of $V_\l$ in $V^{\xx n}$ is greater than $Cm^n/n^k$ for some $C$ and $k$ depending only on $m$.
\end{lemma}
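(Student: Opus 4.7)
The plan is to exploit the identity $m^n = \sum_\lambda m_\lambda \dim V_\lambda$, where $\lambda$ runs over partitions of $n$ with at most $m$ parts and $m_\lambda$ denotes the multiplicity of $V_\lambda$ in $V^{\xx n}$. Combined with the two polynomial bounds already established, namely $\dim V_\lambda \le P(n)$ from the Weyl dimension formula and the observation that at most $(2n^{2/3})^{m-1}$ partitions of $n$ are close to the mean, a pigeonhole argument will produce a single $\lambda$ close to the mean with $m_\lambda \ge Cm^n/n^k$ for some $k$ depending only on $m$, provided the contribution of partitions far from the mean to $\sum_\lambda m_\lambda \dim V_\lambda$ is negligible.

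To bound that contribution I will use the inequality $m_\lambda \le \binom{n}{\lambda_1,\ldots,\lambda_m}$. This follows from the weight-space decomposition: the $\lambda$-weight space of $V^{\xx n}$ is spanned by pure tensors containing exactly $\lambda_i$ copies of $v_i$ for each $i$, so it has dimension $\binom{n}{\lambda_1,\ldots,\lambda_m}$, while inside it the $V_\lambda$-summands each contribute a one-dimensional $\lambda$-weight space (the highest weight space). Combining with $\dim V_\lambda \le P(n)$,
\[ \sum_{\lambda \text{ not close}} m_\lambda \dim V_\lambda \;\le\; P(n) \sum_{\lambda \text{ not close}} \binom{n}{\lambda_1,\ldots,\lambda_m} \;\le\; P(n)\sum_{\mu \text{ not close}} \binom{n}{\mu_1,\ldots,\mu_m}, \]
where the last sum ranges over all non-negative integer tuples summing to $n$ and is taken because replacing partitions by arbitrary tuples only enlarges the sum.

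The rightmost sum equals $m^n$ times the probability that a multinomial sample with parameters $n$ and $(1/m,\ldots,1/m)$ has some coordinate deviating from $n/m$ by at least $n^{2/3}$. By Hoeffding's inequality applied to each coordinate followed by a union bound — essentially the concentration argument already used in the proof of Theorem \ref{chi infinite} — this probability is at most $2me^{-2n^{1/3}}$. Hence the entire contribution of partitions not close to the mean is bounded by $2mP(n)e^{-2n^{1/3}}m^n$, which is $o(m^n/n^k)$ for every fixed $k$. Consequently $\sum_{\lambda \text{ close}} m_\lambda \dim V_\lambda \ge m^n/2$ for all sufficiently large $n$, and dividing this mass among the at most $(2n^{2/3})^{m-1}$ summands, each of which satisfies $\dim V_\lambda \le P(n)$, forces some $m_\lambda \ge Cm^n/n^k$ with $\lambda$ close to the mean. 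I expect the only step requiring any care to be the large-deviation estimate, but since it is a standard Hoeffding bound paralleling a computation already made in the paper, no new machinery is needed.
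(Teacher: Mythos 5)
Your argument is correct and follows essentially the same route as the paper: bound the multiplicity of each $V_\lambda$ by the dimension of the $\lambda$-weight space (a multinomial coefficient), kill the contribution of weights far from the mean by a concentration inequality (you use Hoeffding where the paper uses Bernstein, which is immaterial here), and pigeonhole the remaining $\ge m^n/2$ mass over polynomially many close-to-the-mean highest weights using the Weyl dimension bound $\dim V_\lambda \le P(n)$. If anything, your final step is slightly more careful than the paper's, since you explicitly divide by both the number of close partitions and the dimension bound $P(n)$ before concluding $m_\lambda \ge Cm^n/n^k$.
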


\begin{proof}
    Let $\{ v_1, \ldots, v_m\}$ be the standard basis of $V$; hence $v_i$ is of weight $[e_i]$. Then \begin{equation} \label{form of basis}
    \{v_{a_1}\xx v_{a_2} \xx \cdots \xx v_{a_n} \mid (a_1, a_2, \ldots, a_n) \in \{1, 2, \ldots, m\}^{\X n} \}
\end{equation}
 is a basis of $V^{\xx n}$. For each $v_{a_1}\xx  \cdots \xx v_{a_n}$, let $n_i$ be the number of times that $i$ occurs in $\{a_k\}_{k=1}^n$; then $v_{a_1}  \xx \cdots \xx v_{a_n}$ is of weight $[(n_1, \ldots, n_m)]$, and $[(n_1, \ldots, n_m)]$ is of multiplicity $\frac{n!}{\prod n_i!}$.

Consider a random walk in $\Z^m$ starting from $(0, \ldots, 0)$. For each step, there are $m$ possible directions $e_i$ with probability $1/m$. After $n$ steps, the probability of being at $(\l_1, \ldots, \l_m)$ is given by the multinomial coefficient $\frac{n!}{\prod \l_i !\ }$.

Let $X_i$ be the random variable of the first coordinate of the $i$-th step direction. For each $X_i$, the possible outcomes are $1$ or $0$, with probability $1/m$ and $(m-1)/m$, respectively. The random variables $X_i$ are i.i.d. The value of $\sum_{i=1}^n E[X_i^2]$ is $n/m$. Let $S_n=\sum_{i=1}^n X_i$. The expected value of $S_n$ is $n/m$. 

By Bernstein's inequality,
$$\Pr\Bigl[\vert S_n-\frac{n}{m} \vert >t\Bigr]< 2e^{-t^2/2(n/m+t/3)}. $$

Let $t=n^{2/3}$. Then \[\frac{t^2}{2(n/m+t/3)}=\frac{n^{4/3}}{2(n/m+n^{2/3}/3)}>Cn^{\frac{1}{3}}\] for some $C$ when $n>N$ for some $N>0$.

Hence, 

\begin{align*}
    \Pr\Bigl[ \vert S_n-\frac{n}{m}\vert > n^{2/3}\Bigr] &< 2e^{-Cn^{1/3}}, \\
    \Pr\bigl[ (\lambda_1, \ldots, \lambda_m) \text{ is not close to the mean}\bigr] &< 2me^{-Cn^{1/3}},\\
    \Pr\bigl[ (\lambda_1, \ldots, \lambda_m) \text{ is  close to the mean}\bigr] &> 1- 2me^{-Cn^{1/3}}.
\end{align*}

 The total multiplicity of weights of $V^{\xx n}$ that are close to the mean is larger than $m^n\bigl(1-2me^{-Cn^{1/3}}\bigr) > Dm^n$ for some $D>0$ whenever $n>N$ for some $N$. Let $A$ be the sum of the dimensions of summands $V_\l$ of $V^{\xx n}$ with multiplicity, where $\l$ is close to the mean. Since the total multiplicity of weights not close to the mean is smaller than $m^n\cdot 2me^{-Cn^{1/3}}$, and since by the Weyl dimension formula, the dimension of each irreducible summand of $V^{\xx n}$ is bounded by $P(n)$, it follows that the sum of dimensions of $V_\mu$ with multiplicity, where $V_\mu$ is a summand of $V^{\xx n}$ and $\mu$ is not close to the mean, is $\mathcal{O}\Bigl(m^{n+1}e^{-Cn^{1/3}} n^{m(m-1)/2}\Bigr)$. Therefore, $A > Dm^n$ for some $D>0$ when $n$ is large enough.

Let $P_m(n)$ be the number of partitions of $n$ with at most $m$ parts. The number $P_m(n)$ is bounded by the number of the ways of putting $n$ undistinguished balls into $m$ bins, which is the value of a polynomial in $n$. Hence, $P_m(n)<Cn^k$ for some $C$ and $k$ when $n$ is large enough. Therefore, the number of isomorphism classes of $V_\l$ where $\l$ is close to the mean is smaller than $P_m(n)$, and hence smaller than $Cn^k$.

It follows that there exists $V_\l$ where $\l$ is close to the mean, whose multiplicity in $V^{\xx n}$ is greater than $A/Cn^k> C'm^n/n^k$ for some $C'$ when $n$ is large enough.
\end{proof}

Now we are ready to prove the main theorem of this section.

\begin{thm}\label{theoremForSL}
    Let $V$ be the natural representation of $\SL(V)$. Then $$\limsup_{n \to \infty}\TS_{\SL(V)}(V^{\xx n})^{1/n}=\dim V=m.$$
\end{thm}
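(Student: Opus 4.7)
The plan is to combine the two preceding lemmas via duality. Let $n$ be large. By Lemma \ref{V_l}, there exists an irreducible summand $V_\lambda$ of $V^{\otimes n}$ with $\lambda$ close to the mean whose multiplicity $m_\lambda$ in $V^{\otimes n}$ satisfies $m_\lambda \ge C m^n/n^k$ for constants $C, k$ depending only on $m$. By Lemma \ref{dualrep}, since $\lambda$ is close to the mean, the dual representation $V_\lambda^*$ occurs as a summand of $V^{\otimes n'}$ for some $n' < m n^{2/3}$; so its multiplicity in $V^{\otimes n'}$ is at least $1$.

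Next I would use the fact that, by Schur's lemma (available since $V_\lambda$ is irreducible and the characteristic is zero), $V_\lambda \otimes V_\lambda^*$ contains the trivial representation with multiplicity exactly one. Distributing the tensor product over the direct sum decompositions of $V^{\otimes n}$ and $V^{\otimes n'}$, one obtains
\[
\TS_{\SL(V)}(V^{\otimes (n+n')}) \ge m_\lambda \cdot 1 \ge C m^n/n^k .
\]

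Taking the $(n+n')$-th root and using that $n' < m n^{2/3} = o(n)$, each of the factors $C^{1/(n+n')}$, $n^{-k/(n+n')}$ tends to $1$, while $m^{n/(n+n')} \to m$ as $n \to \infty$. Hence there is a subsequence of indices along which $\TS_{\SL(V)}(V^{\otimes N})^{1/N} \to m$, proving
\[
\limsup_{N \to \infty} \TS_{\SL(V)}(V^{\otimes N})^{1/N} \ge m.
\]
The reverse inequality is immediate from $\TS_{\SL(V)}(V^{\otimes N}) \le \dim V^{\otimes N} = m^N$, already noted in the introduction.

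I do not expect any serious obstacle: the probabilistic estimates have already been done inside Lemmas \ref{dualrep} and \ref{V_l}, and the only remaining issue is checking that the subsequence $N = n + n'(n)$ is well-defined and that the polynomial factors $C/n^k$ become negligible under an $N$-th root whose denominator grows linearly in $n$; both are routine. The only point deserving a sentence of comment is that we are passing to a $\limsup$ rather than a limit, so the existence of one subsequence along which the root approaches $m$ suffices.
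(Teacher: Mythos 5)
Your argument is correct and is essentially the same as the paper's: both combine Lemma \ref{V_l} and Lemma \ref{dualrep}, observe that each copy of $V_\lambda \otimes V_\lambda^*$ inside $V^{\otimes(n+n')}$ contributes a trivial summand, and then take $(n+n')$-th roots using $n' = o(n)$. The only cosmetic difference is that you invoke Schur's lemma for the exact multiplicity one, where the paper only needs that $W \otimes W^*$ contains a trivial summand.
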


\begin{proof}

Let $V_\l$ be a representation provided by Lemma \ref{V_l} for some sufficiently large $n$. Therefore, the  multiplicity of $V_\l$ in $V^{\xx n}$ is greater than $Cm^n/n^k$ and $\l$ is close to the mean. By Lemma \ref{dualrep}, there is some $n'<mn^{2/3}$ such that $V^{\xx n'}$ contains $V_\l^*$. Therefore, $V^{\xx n} \xx V^{\xx n'}$ contains $V_\l \xx V_\l^*$ with multiplicity greater than $Cm^n/n^k$. Since the trivial representation is a summand of $W\xx W^*$ for any representation $W$, 
\[\TS_{\SL(V)} \bigl(V^{\otimes n+n'}\bigr)^{\frac{1}{n+n'}}>\bigl(Cm^n/n^k\bigr)^{\frac{1}{n+n'}}>\bigl(Cm^n/n^k\bigr)^{\frac{1}{n+mn^{2/3}}}.\] 
As $$
\lim_{n\to\infty} \bigl(Cm^n/n^k\bigr)^{\frac{1}{n+mn^{2/3}}}=\lim_{n \to \infty} m^{\frac{n}{n+mn^{2/3}}}=\lim_{n \to \infty} m^{\frac{1}{1+mn^{-1/3}}}=m,$$ it follows that there is a subsequence of $(\TS_{\SL(V)}(V^{\xx k}))_k$ such that $$\lim_{l \to \infty} \TS_{\SL(V)}\bigl(V^{\otimes k_l}\bigr)^{1/k_l}=m.$$ Hence, \[\limsup_{n \to \infty} \TS_{\SL(V)} \bigl(V^{\otimes n}\bigr)^{1/n} = m.\]

\end{proof}

\begin{remark} \label{ReduceToLimit}
As $n+n' = n +(m\lambda_1-n)=m\lambda_1$, there is a subsequence $\bigl(\TS_{\SL(V)}\bigl(V^{\xx m n_k}\bigr)\bigr)_k$ of $\TS_{\SL(V)} (V^{\xx mn})$ such that \[\lim_{k \to \infty} \TS_{\SL(V)} \bigl(V^{\otimes mn_k}\bigr)^{1/mn_k} = m. \] Therefore, \[ \limsup_{n \to \infty} \TS_{\SL(V)} \bigl(V^{\otimes mn}\bigr)^{1/mn} = m.
 \]

Let $a_l=\TS_{\SL(V)}\bigl(V^{\xx ml}\bigr)$. 
The representation $V^{\xx m}$ contains $\bigwedge^m V$, which is trivial since $G=\SL(V)$. Therefore, $a_l>0$. Moreover, by distribution of tensor product over direct sum, $a_{l+k}\ge a_l \cdot a_k$. By Fekete's subadditive lemma \cite{fekete1923verteilung}, $\lim_{n \to \infty} a_n^{1/n}$ exists. Hence, \[\lim_{n \to \infty} \TS_{\SL(V)} \bigl(V^{\otimes mn}\bigr)^{1/mn}= m.\]
\end{remark}

\subsection{Asymptotic behavior of an arbitrary group}

\begin{prop} \label{chi finite}
    Let $V$ be an $m$-dimensional vector space over $K$ where $K$ is algebraically closed. Let $G$ be a closed subgroup of $\GL(V)$. If $\image \left.\det\right|_G$ is of order $k$, then  $d(G, V)=m.$
\end{prop}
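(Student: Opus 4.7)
The plan is to lower bound $\TS_G(V^{\xx n})$ by $\TS_{\SL(V)}(V^{\xx n})$ along an arithmetic subsequence of $n$ and then invoke Remark \ref{ReduceToLimit}. The key object to analyze is the subspace of $\SL(V)$-invariants $U_n := (V^{\xx n})^{\SL(V)} \subseteq V^{\xx n}$.

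Because $\SL(V)$ is reductive and normal in $\GL(V)$, in characteristic zero $V^{\xx n}$ is $\GL(V)$-semisimple and $U_n$, being the trivial $\SL(V)$-isotypic component, is a $\GL(V)$-direct summand; in particular, $U_n$ is a $G$-direct summand of $V^{\xx n}$. Via Schur--Weyl duality, the irreducible $\GL(V)$-representation $S^\lambda V$ is $\SL(V)$-trivial precisely when $\lambda = (l, l, \ldots, l)$ is rectangular with $m$ rows, in which case $S^{(l^m)} V \cong (\bigwedge^m V)^{\xx l}$ is a power of the determinant character. Consequently $U_n = 0$ unless $m \mid n$, and for $n = ml$ the summand $U_{ml}$ is a direct sum of $\TS_{\SL(V)}(V^{\xx ml})$ copies of the $\GL(V)$-character $\det^l$.

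Next I would specialize $n$ so that this character becomes $G$-trivial. Since $\image \det|_G$ is cyclic of order $k$, the restriction $\det^l|_G$ is trivial exactly when $k \mid l$. Setting $l = k l'$ so that $n = m k l'$, the entire summand $U_{mkl'}$ becomes trivial as a $G$-representation, and every one-dimensional subspace of it is a trivial $G$-summand of $V^{\xx mkl'}$. Hence
\[
\TS_G(V^{\xx mkl'}) \;\ge\; \dim U_{mkl'} \;=\; \TS_{\SL(V)}(V^{\xx mkl'}).
\]

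Finally, Remark \ref{ReduceToLimit} gives $\TS_{\SL(V)}(V^{\xx mn})^{1/mn} \to m$ as $n \to \infty$. Applying this along the subsequence $n = k l'$ yields $\TS_G(V^{\xx mkl'})^{1/mkl'} \to m$, so $d(G, V) \ge m$; combined with the obvious $d(G, V) \le \dim V = m$, this gives equality. The one step that could cause trouble is the assertion that $U_n$ is a $G$-direct summand of $V^{\xx n}$, but it is handled cleanly by reductivity of $\SL(V)$ and its normality in $\GL(V)$ in characteristic zero.
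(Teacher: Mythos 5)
Your argument is correct, and it lands on the same crucial inequality as the paper --- namely $\TS_G(V^{\xx N}) \ge \TS_{\SL(V)}(V^{\xx N})$ for $N$ ranging over a suitable arithmetic progression, followed by the appeal to Remark \ref{ReduceToLimit} --- but the mechanism is genuinely different. The paper never looks inside $V^{\xx n}$: it writes $g = \det(g)^{1/m}\bigl(\det(g)^{-1/m}g\bigr)$ to embed $G$ in $Z\,\SL(V)$, where $Z$ is the group of scalar matrices $\omega I$ with $\omega^{mk}=1$, observes that $Z$ acts trivially on $V^{\xx mk}$ and hence on the relevant tensor powers, and concludes $\TS_G \ge \TS_{Z\SL(V)} = \TS_{\SL(V)}$ there. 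You instead analyze the trivial $\SL(V)$-isotypic component $U_n$ directly, using semisimplicity of $V^{\xx n}$ as a $\GL(V)$-module together with Schur--Weyl (only rectangular partitions $(l^m)$ give $\SL(V)$-trivial constituents, and $S^{(l^m)}V \cong \det^l$) to identify $U_{ml}$ as a $G$-direct summand isomorphic to $(\det^l)^{\oplus \TS_{\SL(V)}(V^{\xx ml})}$, which becomes $G$-trivial once $k \mid l$. Your route costs more machinery (semisimplicity of $\GL(V)$-tensor powers and the classification of $\SL(V)$-trivial constituents) where the paper's is softer, needing only that scalars act trivially on suitable powers; in exchange, yours makes explicit exactly which subspace supplies the trivial $G$-summands and works with the modulus $mk$ rather than the $m^2k$ implicit in the paper's choice. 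One caveat worth flagging: the characteristic-zero hypothesis you invoke for semisimplicity is not in the statement but is implicit from its placement in \cref{sec: char 0} (the paper needs it just as much, via Theorem \ref{theoremForSL}); in positive characteristic the proposition is false in general.
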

\begin{proof}
    Let $d$ be an arbitrary natural number and $$Z=\{\omega I \in \GL(V) \mid \omega \text{ is a } d \text{-th root of unity}\}.$$ The action of an element $\omega I$ of $Z$ on $V$ is the scalar multiplication by $\omega$. Therefore, $Z$ acts trivially on $V^{\xx d}.$ 

    Consider the subgroup $Z\SL(V)$ of $\GL(V)$.

    By Remark \ref{ReduceToLimit}, we have $\lim_{n \to \infty} \TS_{\SL(V)}\bigl(V^{\xx m n}\bigr)^{1/mn}=m$. Therefore, $$\lim_{n \to \infty} \TS_{\SL(V)}\bigl(V^{\xx m nd}\bigr)^{1/mnd}=m.$$

    Since $Z$ acts trivially on $V^{\xx mn d}$, we have 
    $$\TS_{Z\SL(V)}\bigl(V^{\xx mn d}\bigr)=\TS_{\SL(V)}\bigl(V^{\xx mn d}\bigr).$$
    Therefore, $d(Z\SL(V), V)=m$.

    Now suppose $G$ is a closed subgroup of $\GL(V)$, $\image\left.\det\right|_G$ is of order $k$ and $\dim V=m$. Then $\det (g)$ is a $k$-th root of unity. 
    Consider $$Z=\{ \omega I\in \GL(V) \mid \omega \text{ is an } mk \text{-th root of unity} \}.$$ Since $$g=\det (g)^{1/m} \bigl(\det (g)^{-1/m} g\bigr) \in Z \SL(V),$$ it follows that $G \subset Z\SL(V) \subset \GL(V)$. Hence $\TS_G(V^{\xx n}) \ge \TS_{Z\SL(V)}(V^{\xx n})$ for all $n$, and it follows that $$d(G, V)\ge d(Z\SL(V), V)=m.$$
\end{proof}

\begin{thm} \label{MainTheorem}
    Let $G$ be a closed subgroup of $\GL(V)$. Then $$d(G, V)=\dim V$$ if and only if $\image\left.\det\right|_G$ is of finite order. 
\end{thm}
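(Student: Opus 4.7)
The theorem asserts the equivalence $d(G,V)=\dim V \Longleftrightarrow |\image \det|_G|<\infty$, and both directions are essentially already packaged in the prior results; the proof will be short.

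For the $(\Leftarrow)$ direction I will simply invoke Proposition \ref{chi finite}, which yields $d(G,V)=\dim V$ whenever $\image \det|_G$ is finite. Nothing more is needed here.

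For the $(\Rightarrow)$ direction I will prove the contrapositive: if $\image \det|_G$ is of infinite order, then $d(G,V)<\dim V$. Theorem \ref{chi infinite} already delivers this conclusion, but under the stronger hypothesis that $\image \det|_{G^\circ}$ is infinite. So the one step I need to bridge is the following: since $G$ is a closed subgroup of $\GL(V)$, $G$ is an algebraic group of finite type, so the identity component $G^\circ$ has finite index in $G$, say $[G:G^\circ]=s<\infty$. The group $\det(G)\subseteq K^\times$ is abelian, and $\det(G^\circ)$ is a subgroup of $\det(G)$ with index at most $s$ (the quotient $\det(G)/\det(G^\circ)$ is a quotient of $G/G^\circ$). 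Therefore if $\det(G)$ is infinite, $\det(G^\circ)$ must also be infinite. Applying Theorem \ref{chi infinite} then gives $d(G,V)<\dim V$, as desired.

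Because both the forward and reverse directions reduce almost immediately to the cited statements, there is no genuine obstacle here; the only mild subtlety is the transfer of finiteness of the determinant image between $G$ and $G^\circ$, and this is handled by the finite-index observation above. The proof will therefore amount to assembling these two ingredients and verifying the one-line index argument.
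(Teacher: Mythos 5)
Your proposal is correct and follows the same route as the paper, which deduces the theorem directly from Proposition \ref{chi finite} and Theorem \ref{chi infinite}. Your explicit finite-index argument transferring infiniteness of $\image\left.\det\right|_{G}$ to $\image\left.\det\right|_{G^\circ}$ is exactly the observation the paper makes (somewhat tersely) in the first sentence of the proof of Theorem \ref{chi infinite}, so you have simply made the bridging step explicit rather than done anything genuinely different.
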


\begin{proof}
    It follows from Propositions \ref{chi finite} and \ref{chi infinite}.
\end{proof}

Theorem \ref{main_char_zero} follows from Theorem \ref{MainTheorem} and the following observation: given an abstract group $G$ in $\GL(V)$, the determinant map takes finitely many distinct values on $G$ if and only if it takes finitely many distinct values on $\overline{G}$ where $\overline{G}$ is the Zariski closure of $G$ in $\GL(V)$.

\section{Characteristic $p$ case} \label{sec: char p}

\subsection{Asymptotic behavior of the cyclic group of order $p$}
Let $$G=\Z/p\Z =\langle g \rangle ,$$ $\pi:G \to \GL(V)$, $V$ a representation of $G$ over $K$, where $K$ is a field of characteristic $p$. Since $g^p=e$, it follows that $(\pi(g)- \pi (e))^p=0$. Hence, the only eigenvalue of $\pi(g)$ is $1$, and each Jordan block is of size smaller than or equal to $p$. Suppose $V$ is indecomposable; then $\pi(g)$ has only one Jordan block. Hence, $1 \le \dim V \le p$. Let $V_i$ denote the indecomposable representation of the cyclic group of dimension $i+1$.

Let $R(G)$ be the representation ring over $K$. Abusing notation, we use $V_i$ to denote both the representation $V_i$ and its isomorphism class. Therefore, $R(G)=\Z V_0 \oplus \cdots \oplus \Z V_{p-1}$. Given a homomorphism of additive groups $S: R(G) \to R(G)$, let $[S]$ denote the matrix of the map $S$ with respect to the ordered basis $\{V_0, \cdots, V_{p-1}\}$; given $W\in R(G)$, let $[W]$ denote the matrix of $W$.

Let $R^+(G)$ be the subset of $R(G)$ consisting of isomorphism classes of representations of positive dimension. Define a map $Q:R^+(G) \to \Q^p$ that sends the class of a representation $V$ to a probability vector (i.e., a vector with nonnegative coordinates that sum up to 1) as follows \[V=a_0 V_0+\cdots + a_{p-1}V_{p-1} \longmapsto \frac{1}{\dim V} \begin{bmatrix}
    a_0\\
    2a_1\\
    \vdots \\
    pa_{p-1}
\end{bmatrix}. \]
In other words, the vector $Q(V)$ consists of the ratio of the dimension of $a_iV_i$ to the dimension of $V$.

Given a homomorphism of additive semigroups $S: R(G)^+ \to R(G)^+$, define $P(S)=\begin{bmatrix}
    w_0 &w_1 &\cdots &w_{p-1}
\end{bmatrix}\in M_p(\R)$, where the column vector $w_i=Q(S(V_i))$.

\begin{example}

Let $p=2$ and $S: R(G) \to R(G)$ a homomorphism of additive groups such that     $[S]=\begin{bmatrix}
        2 &1\\
        0 &1
    \end{bmatrix}.$ Then 
    $[S^2]=\begin{bmatrix}
        4 &3\\
        0 &1
    \end{bmatrix},$    
    $P(S)=\begin{bmatrix}
        1 &1/3\\
        0 &2/3
    \end{bmatrix},$ and
    $P(S^2)=\begin{bmatrix}
        1 &3/5\\
        0 &2/5
    \end{bmatrix} \neq P(S)^2.   
    $ 
    
\end{example}
 
\begin{lemma}\label{stst}
   Let $S_1,\ S_2$ be two endomorphisms of $R(G)$ as an additive group. If there is a representation $V_{S_i}$ such that $S_i$ sends $V$ to $V\xx V_{S_i}$, then $P(S_1)\cdot P(S_2)=P(S_1 \circ S_2)$.
\end{lemma}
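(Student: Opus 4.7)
The plan is to verify the identity column by column. Because each $S_i$ sends a representation $V$ to $V \otimes V_{S_i}$, the composition is $(S_1 \circ S_2)(V) = V \otimes V_{S_2} \otimes V_{S_1}$, so the $i$-th column of $P(S_1 \circ S_2)$ is $Q(V_i \otimes V_{S_2} \otimes V_{S_1})$, while the $i$-th column of $P(S_1)P(S_2)$ is $P(S_1) \cdot Q(V_i \otimes V_{S_2})$. My task is to show these two vectors in $\Q^p$ agree entry by entry.

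First I would fix $i$ and decompose
\[
V_i \otimes V_{S_2} = \sum_{j} b_j\, V_j, \qquad V_j \otimes V_{S_1} = \sum_{k} c_k^{(j)}\, V_k,
\]
so that the multiplicity of $V_k$ in $V_i \otimes V_{S_2} \otimes V_{S_1}$ is $\sum_j c_k^{(j)} b_j$. By the definition of $Q$, the $(k,j)$-entry of $P(S_1)$ is $(k+1)c_k^{(j)}/\dim(V_j \otimes V_{S_1}) = (k+1)c_k^{(j)}/((j+1)\dim V_{S_1})$, and the $j$-th entry of $Q(V_i \otimes V_{S_2})$ is $(j+1)b_j/((i+1)\dim V_{S_2})$.

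Next I would compute the $k$-th entry of $P(S_1)\, Q(V_i \otimes V_{S_2})$ by summing the product over $j$. The factor $(j+1)$ in the numerator coming from $Q(V_i \otimes V_{S_2})_j$ cancels exactly the factor $(j+1) = \dim V_j$ in the denominator of $P(S_1)_{kj}$. What remains is
\[
\frac{k+1}{(i+1)\dim V_{S_1}\dim V_{S_2}} \sum_j c_k^{(j)} b_j = \frac{(k+1)\,\mathrm{mult}\bigl(V_k,\, V_i \otimes V_{S_2} \otimes V_{S_1}\bigr)}{\dim(V_i \otimes V_{S_2} \otimes V_{S_1})},
\]
using the multiplicativity of dimension under tensor product. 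By definition this is the $k$-th entry of $Q(V_i \otimes V_{S_2} \otimes V_{S_1})$, which completes the identification of the $i$-th columns.

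The computation involves no real obstacle; the point is conceptual. The $(j+1)$-weighting built into the definition of $Q$ is precisely what turns the multiplicativity of $\dim$ under tensor product into a matrix identity, and this in turn is what forces $S \mapsto P(S)$ to respect composition when both endomorphisms are of tensor-product form. The example preceding the lemma shows that without this tensor-product hypothesis (the map $S$ there is not of the form $V \mapsto V \otimes V_S$), the identity $P(S^2) = P(S)^2$ already fails, so the hypothesis is essential and cannot be weakened by a purely formal argument.
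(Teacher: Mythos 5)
Your proof is correct and is essentially the paper's argument carried out entry by entry: the paper packages the same cancellation by observing that $P(S_i) = D\,[S_i]\,D^{-1}\cdot(\dim V_{S_i})^{-1}$ with $D = \mathrm{diag}(1,\dots,p)$, after which the identity is immediate from $[S_1\circ S_2]=[S_1][S_2]$ and multiplicativity of dimension. The $(j+1)$-cancellation you identify is exactly the $D^{-1}D$ cancellation in that conjugation formula.
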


\begin{proof}
    Given $S_i$ and  $V_{S_i}$ satisfying the condition, let $d_i= \dim V_{S_i}$. Let $D=\text{diag}(1, 2, \cdots, p)$.

    Then $[S_i]=\begin{bmatrix}
        v_0 & v_1 &\cdots &v_{p-1}
    \end{bmatrix}$ where $v_j=[S_i(V_j)]$. The sum of the coordinates of $Dv_j$ is $(j+1)\cdot d_i$.
    
    Hence,$$P(S_i)=\begin{bmatrix}
        \frac{Dv_0}{d_i} &\frac{Dv_1}{2\cdot d_i} &\cdots &\frac{Dv_{p-1}}{p\cdot d_i}\end{bmatrix}        =D\cdot [S_i] \cdot D^{-1} \cdot d_i^{-1}. $$

    Then \begin{align*}
        P(S_1)\cdot P(S_2)&=D\cdot [S_1] \cdot D^{-1} \cdot d_1^{-1} D\cdot [S_2] \cdot D^{-1} \cdot d_2^{-1}\\
        &=D\cdot [S_1] \cdot [S_2] \cdot D^{-1}\cdot d_1^{-1}d_2^{-1}\\
        &=D\cdot [S_1 \circ S_2]\cdot D^{-1}\cdot (d_1 d_2)^{-1}=P(S_1 \circ S_2).
    \end{align*}
\end{proof}

\begin{prop}\label{d(G, V_1)}
    Let $V_1$ be the indecomposable representation of $G$ of dimension 2. The number $d(G, V_1) < 2.$
\end{prop}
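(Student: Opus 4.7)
The plan is to transfer the question to a spectral problem for the matrix $P(S) \in M_p(\R)$, where $S : R^+(G) \to R^+(G)$ denotes the endomorphism $W \mapsto W \xx V_1$. Iterating Lemma \ref{stst} gives $P(S)^n = P(S^n)$, so the column of $P(S)^n$ indexed by $V_0$ equals $Q(S^n(V_0)) = Q(V_1^{\xx n})$. Writing $V_1^{\xx n} = \sum_{i=0}^{p-1} a_i^{(n)} V_i$, the entry in the row indexed by $V_0$ of this column equals $a_0^{(n)}/2^n$, and since $a_0^{(n)} = \TS_G(V_1^{\xx n})$, one has
\[
\TS_G(V_1^{\xx n}) = 2^n \bigl(P(S)^n\bigr)_{0,0}.
\]
Proving $d(G, V_1) < 2$ thus reduces to showing $(P(S)^n)_{0,0} \le C r^n$ for some $r < 1$ and constant $C$.

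To analyze $P(S)$, I would invoke the decomposition of $V_i \xx V_1$ to be established in \cref{appendix: decomp}: namely $V_0 \xx V_1 = V_1$; $V_i \xx V_1 = V_{i-1} \oplus V_{i+1}$ for $1 \le i \le p-2$; and $V_{p-1} \xx V_1 = 2V_{p-1}$, the last because $V_{p-1}$ is the regular (hence projective) representation of $\Z/p\Z$. Consequently the column of $P(S)$ indexed by $V_{p-1}$ equals $e_{p-1}$, so $V_{p-1}$ is an absorbing state. Let $M$ denote the $(p-1) \times (p-1)$ principal submatrix of $P(S)$ indexed by $\{V_0, \ldots, V_{p-2}\}$. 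Since no transition in $P(S)$ leads from $V_{p-1}$ back into this set, a block-triangular argument gives $(P(S)^n)_{0,0} = (M^n)_{0,0}$ for every $n$.

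It remains to show $\rho(M) < 1$, which is the key technical point. The matrix $M$ is nonnegative with all column sums at most $1$: the columns indexed by $V_0, \ldots, V_{p-3}$ have sum exactly $1$, while the column indexed by $V_{p-2}$ has sum $(p-2)/(2(p-1)) < 1$, reflecting the mass $p/(2(p-1))$ that leaks into the absorbing state $V_{p-1}$. The directed graph underlying $M$ is the nearest-neighbor graph on $\{0, 1, \ldots, p-2\}$, which is strongly connected, so $M$ is irreducible. Since $\rho(M) \le \|M\|_1 \le 1$, Perron-Frobenius supplies, in the extremal case $\rho(M) = 1$, a positive eigenvector $v$ with $Mv = v$; summing coordinates gives $\sum_j v_j (1 - c_j) = 0$ where $c_j$ is the $j$-th column sum, contradicting $c_{p-2} < 1$ and $v_{p-2} > 0$. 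Hence $\rho(M) < 1$, and the spectral radius formula yields $(M^n)_{0,0} \le C \rho(M)^n$, so
\[
a_0^{(n)} = 2^n (M^n)_{0,0} \le C \bigl(2 \rho(M)\bigr)^n,
\]
giving $d(G, V_1) \le 2\rho(M) < 2$. The main obstacle I anticipate is the spectral bound $\rho(M) < 1$; it rests on combining the leakage at $V_{p-2}$ with the irreducibility of the birth-death structure on $\{V_0, \ldots, V_{p-2}\}$, both of which ultimately come from the explicit modular tensor decomposition for $\Z/p\Z$.
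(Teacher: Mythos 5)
Your proposal is correct, and it runs on the same engine as the paper's proof --- the operator $S\colon W\mapsto W\xx V_1$, the identity $P(S)^n=P(S^n)$ from Lemma \ref{stst}, the explicit decompositions of Propositions \ref{decomV1} and \ref{decomVp-1}, and the key phenomenon that mass leaks into the absorbing projective state $V_{p-1}$. Where you diverge is in how the leak is quantified. The paper passes to $P(T^{p-1})$, notes that every entry of its bottom row is positive (so every state leaks at least $\epsilon$ into $V_{p-1}$ in $p-1$ steps), and deduces $a_0^{(n(p-1))}<2^{(p-1)n}(1-\epsilon)^n$; because this only controls the subsequence $n\in(p-1)\Z$, the paper then has to invoke Fekete's subadditive lemma together with $\TS_G(V_1^{\xx n})^2\le\TS_G(V_1^{\xx 2n})$ to convert that into a bound on the full $\limsup$. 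Your route instead isolates the substochastic block $M$ on $\{V_0,\dots,V_{p-2}\}$ and proves $\rho(M)<1$ by Perron--Frobenius (irreducibility of the birth--death graph plus the strict column-sum deficit at $V_{p-2}$); this yields $\TS_G(V_1^{\xx n})\le Cn^k(2\rho(M))^n$ for \emph{every} $n$ at once, so the Fekete step disappears entirely. That is a genuine simplification. Two small points to tidy up: (i) the spectral radius formula gives $\|M^n\|\le Cn^k\rho(M)^n$ (polynomial correction from Jordan blocks) rather than $C\rho(M)^n$ on the nose, which is harmless since $\rho(M)<1$; and (ii) your structural description (nearest-neighbor graph, leakage only at column $V_{p-2}$) presupposes $p\ge 3$ --- for $p=2$ one has $V_1=V_{p-1}$, the block $M$ is the $1\times1$ zero matrix, and $\TS_G(V_1^{\xx n})=0$ for $n\ge1$, so that case should be dispatched separately in one line, exactly as the paper does.
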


\begin{proof}
    If $p=2$, it is obvious since $V_1^{\xx n}= V_1^{\oplus 2^{n-1} }.$ Therefore, we assume $p>2$ from now on. 

Let $T:R(G)\to R(G)$ send $V$ to $V_1\otimes V$. Then $P(T)^n=P(T^n)$ by Lemma \ref{stst}. Since $V_1^{\xx p-1}$ has $V_{p-1}$ as a direct summand, it follows that for all $j$, $V_1^{\xx p-1}\xx V_j$ has $V_{p-1}$ as a direct summand by Proposition \ref{decomVp-1}; therefore, all entries in the $p$-th row of $P(T^{p-1})$ are nonzero. Let $\epsilon$ be a positive number smaller than the minimum value of those entries. Let $s_i$ be the sum of the first $p-1$ coordinates of the $i$-th column of $P(T^{p-1}).$ Since the $p$-th coordinate of the $i$-th column of $P(T^{p-1})$ is larger than $\epsilon$, $s_i<1-\epsilon$ for all $i$. Since $V_i \xx V_{p-1}=(i+1) V_{p-1}$ for any $i$ by Proposition \ref{decomVp-1}, the $p$-th column of $P(T^{p-1})$ is the vector with $1$ in the $p$-th coordinate and $0$ in the other coordinates, hence $s_p=0$. 

Let $w=[
    a_1 \  \cdots \  a_p
]^T$ be an arbitrary probability vector, and write $P(T^{p-1})w=[b_1 \  \cdots \  b_p]^T$. Then $$b_1+\cdots +b_{p-1}=\sum^p_{i=1} a_i \cdot s_i=\sum ^{p-1}_{i=1} a_i\cdot s_i < (\sum^{p-1}_{i=1}a_i)(1-\epsilon) < 1-\epsilon.$$ Especially, if $\sum ^{p-1}_{i=1} a_i < (1-\epsilon)^n$, then $\sum ^{p-1}_{i=1} b_i <(1-\epsilon)^{n+1}$. Let $w_0=w$ and $w_{n+1}=P(T^{p-1})w_n$. By induction on $n$, the sum of the first $(p-1)$-th coordinates of $w_n$ is smaller than $(1-\epsilon)^n$.

Take $w_0=[1 \  0 \ \cdots \  0]^T$. Each entry of $w_n$ is the ratio of the dimension of $V_i$ with multiplicity to the dimension of $V_1^{\xx (p-1)n}$, which is $2^{(p-1)n}$. Since the sum of the first $(p-1)$-th coordinates of $w_n$ is smaller than $(1-\epsilon)^n$, the total dimension of the factors $V_0, V_1, \cdots, V_{p-2}$  with multiplicity in $V_1^{\xx (p-1)n}$ is smaller than $2^{(p-1)n} (1-\epsilon)^n$; therefore, when  write $$V_1^{\xx (p-1)n}=a_0 V_0 \oplus a_1 V_1 \oplus \cdots \oplus a_{p-1}V_{p-1},$$ one has \begin{equation} \label{boundofV0}
    a_0<2^{(p-1)n} (1-\epsilon)^n.
\end{equation}

Assume $p>2$. Since $V_1\xx V_1=V_0\oplus V_2$, it follows that $\TS_G\bigl(V_1^{\xx 2}\bigr)>0$ and $\TS_G\bigl(V_1^{\xx 2n}\bigr)>0$. Since $\TS_G\bigl(V_1^{ \xx 2n+2m}\bigr) \ge \TS_G\bigl(V_1^{\xx 2n}\bigr)\cdot \TS_G\bigl(V_1^{\xx 2m}\bigr),$  by Fekete's subadditive lemma, $\lim_{n \to \infty} \TS_G\bigl(V_1^{\xx 2n}\bigr)^{1/2n}$ exists.

Consider the subadditive sequence $c_n=- \log \TS_G\bigl(V_1^{\xx 2n}\bigr)$. It is well-defined since $\TS_G \bigl(V_1^{\xx 2n}\bigr)>0$. By Fekete's lemma, $\lim_{n \to \infty} c_n/n$ exists so $\lim_{n \to \infty} c_n/2n$ exists. Therefore, $\lim_{n \to \infty} \TS_G \bigl(V_1^{\xx 2n}\bigr)^{-1/2n}$ exists and it is not equal to $0$. Therefore, $\lim_{n \to \infty} \TS_G \bigl(V_1^{\xx 2n}\bigr)^{1/2n}$ exists.

Since $\TS_G\Bigl(V_1^{\xx (p-1)n}\Bigr)^{1/(p-1)n}$ is a subsequence of $\TS_G\bigl(V_1^{\xx 2n}\bigr)^{1/2n}$, it follows that $\lim_{n \to \infty
} \TS_G\Bigl(V_1^{\xx (p-1)n}\Bigr)^{1/(p-1)n}$ exists. By (\ref{boundofV0}), $$\lim_{n \to \infty} \TS_G\Bigl(V_1^{\xx (p-1)n}\Bigr)^{1/(p-1)n}<2. $$ Therefore, $$\lim_{n \to \infty} \TS_G\bigl(V_1^{\xx 2n}\bigr)^{1/2n}<2.$$ For any $n$, one has $\TS_G\bigl(V_1^{\xx n}\bigr)^2 \le \TS_G\bigl(V_1^{\xx 2n}\bigr)$; hence, $$\limsup_{n\to \infty} \TS_G\bigl(V_1^{\xx n}\bigr)^{1/n} \le \lim_{n \to \infty} \TS_G \bigl(V_1^{\xx 2n}\bigr)^{1/2n} < 2.$$
\end{proof}

\begin{thm} \label{d(P, V)}
    Suppose $V$ is a representation of $G$ with some direct summand $V_i$ where $i\neq 0$.     Then $d(G, V) < \dim V.$
\end{thm}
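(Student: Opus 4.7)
The plan is to reproduce the strategy of Proposition \ref{d(G, V_1)} with $V_1$ replaced by the given $V$, exploiting the fact that $V$ contains some indecomposable $V_i$ with $i \ge 1$ as a direct summand. Write $m = \dim V$, let $S \colon R(G) \to R(G)$ be the endomorphism sending $W$ to $V \otimes W$, and set $T = P(S)$, so that by Lemma \ref{stst} we have $T^n = P(S^n)$ for every $n$.

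The first step is to produce an $N$ such that $V^{\otimes N}$ contains $V_{p-1}$ as a direct summand. Since $V$ has $V_i$ with $i \ne 0$ as a summand, $V^{\otimes N}$ contains $V_i^{\otimes N}$, and using the tensor product formulas of Appendix \ref{appendix: decomp} (or equivalently, tracking how the maximal Jordan block of a generator of $G$ grows under tensoring, saturating at size $p$) one shows that $V_i^{\otimes N}$ contains $V_{p-1}$ once $N$ is large enough. Combined with Proposition \ref{decomVp-1}, this implies that $V^{\otimes N} \otimes V_j$ contains $V_{p-1}$ for every $j$, so every entry of the bottom row of $T^N$ is strictly positive; the identity $V_j \otimes V_{p-1} = (j+1) V_{p-1}$ further forces the last column of every power $T^r$ to be the basis vector $e_p$.

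Next, let $\delta > 0$ be the minimum entry of the bottom row of $T^N$. For any probability vector $v$, setting $s_j := 1 - (T^N)_{p,j}$ we have $s_{p-1} = 0$ and $s_j \le 1 - \delta$ for $j < p-1$, so
\[ \sum_{l < p}(T^N v)_l \;=\; \sum_{j} v_j s_j \;\le\; (1-\delta)\sum_{l < p} v_l. \]
Iterating from $v = Q(V_0) = e_1$ yields $\sum_{l < p}(T^{kN} e_1)_l \le (1-\delta)^k$, and for arbitrary $n = qN + r$ with $0 \le r < N$, the last column of $T^r$ being $e_p$ implies that applying $T^r$ cannot increase the sum of the first $p-1$ coordinates; hence $(T^n e_1)_1 \le (1-\delta)^q$. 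Since $(T^n e_1)_1 = \TS_G(V^{\otimes n})/m^n$, one concludes $\TS_G(V^{\otimes n})^{1/n} \le m(1-\delta)^{q/n}$, and letting $n \to \infty$ gives $d(G, V) \le m(1-\delta)^{1/N} < m$.

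The only substantive obstacle is the first step, producing a tensor power of $V$ that contains $V_{p-1}$. After this is in hand, the contraction-on-probability-vectors argument already used in Proposition \ref{d(G, V_1)} closes out the proof with no further complications.
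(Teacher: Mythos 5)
Your proposal is correct and follows the same core strategy as the paper: isolate the two structural facts that (i) some tensor power $V^{\otimes N}$ contains $V_{p-1}$, hence $V^{\otimes N}\otimes V_j\supseteq V_{p-1}$ for every $j$ by Proposition \ref{decomVp-1}, and (ii) $V_j\otimes V_{p-1}=(j+1)V_{p-1}$, and then run the contraction argument on the stochastic matrix $P(S)$ exactly as in Proposition \ref{d(G, V_1)}. (The paper takes $N=p-1$ and verifies the containment via Proposition \ref{decomVm}; your ``grow the maximal Jordan block until it saturates'' version is the same computation.) Where you genuinely diverge is the endgame. The paper only bounds the subsequence $\TS_G(V^{\otimes (p-1)n})$ and then must invoke Fekete's subadditive lemma to show $\lim_n \TS_G(V^{\otimes 2n})^{1/2n}$ exists, which in turn requires the extra property that $V\otimes V$ contains a trivial summand so that these quantities are nonzero. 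You avoid all of this by noting that the last column of every power $T^r$ is $e_p$, so applying $T^r$ never increases the sum of the first $p-1$ coordinates of a probability vector; this yields the uniform bound $\TS_G(V^{\otimes n})\le m^n(1-\delta)^{\lfloor n/N\rfloor}$ for \emph{every} $n$ and hence $d(G,V)\le m(1-\delta)^{1/N}<m$ directly from the definition of $\limsup$. This is cleaner, needs no positivity of $\TS_G(V^{\otimes 2n})$ (which can actually fail, e.g.\ $p=2$ and $V=V_1\oplus V_1$, a case the paper's reduction to ``property (b)'' glosses over), and dispenses with the existence-of-limit discussion entirely. The only blemishes are cosmetic index slips ($s_{p-1}$ should be $s_p$ for the column indexed by $V_{p-1}$), which do not affect the argument.
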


\begin{proof}
    As the case $V=V_1$ is proved, we may assume $V$ is isomorphic to $V_i$ for some $i>1$ or $V$ is decomposable and at least one of the summands is nontrivial.

In the proof of Proposition \ref{d(G, V_1)}, we use that (1): $V_1^{\xx (p-1)} \xx V_i$ has $V_{p-1}$ as a direct summand for any $i$, and (2): $V_1\xx V_1$ has $V_0$ as a direct summand. 

Hence, given an arbitrary $V$ with these two properties (a): $V^{\xx (p-1)} \xx V_i$ has $V_{p-1}$ as a direct summand for any $i$, and (b): $V\xx V$ has $V_0$ as a direct summand, the argument applies. Therefore, $\lim_{n \to \infty}\TS_G(V^{\xx 2n} )^{1/2n}$ exists as in Proposition \ref{d(G, V_1)} and this number is smaller than $\dim V,$ so $$d(G, V) < \dim V.$$

It remains to show that $V$ has property (a) and (b).

If $V=V_m$ for some $m>1$, by the formula in Proposition \ref{decomVm} one has: \begin{equation*}
    V_m \xx V_i=
    \begin{cases}
        V_{\lvert m-i \rvert} \oplus V_{\lvert m-i+2 \rvert} \oplus \cdots \oplus V_{\lvert m+i \rvert},\ \text{when } m+i \le p-1, \\

        dV_{p-1} \oplus V_{m-d}\xx V_{i-d},\ \text{when } m+i=p-2+d \ \text{and}\ d\ge 1.
    \end{cases}
\end{equation*}
This shows that $V$ has properties (a) and (b).

Suppose $V$ is a representation of $G$ and $V$ has a direct summand $V_i$ that is not isomorphic to $V_0$. Since $V^{\xx k}$ contains $V_i^{\xx k}$ as a direct summand for any $k$, $V$ satisfies properties (a) and (b). 
\end{proof}

\subsection{Asymptotic behavior of an arbitrary group}

    Let $K$ be an algebraically closed field of \ch\ $p$. Suppose $G$ is a closed subgroup of $\GL(V)$ where $V$ is an $m$-dimensional vector space over $K$. Let $H$ be the subset of $G$ that contains all unipotent elements.  
\begin{prop} \label{char_p_easy_direction}
    If  $d(G, V)=\dim V$, then $H=\{e\}$ and $\image \left.\det\right|_G$ is of finite order.
\end{prop}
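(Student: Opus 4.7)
The plan is to prove the contrapositive of each conclusion separately: if either $\image \det|_G$ has infinite order or $H \neq \{e\}$, then $d(G,V) < \dim V$.

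For the determinant statement, I would invoke Theorem \ref{chi infinite} directly. Since $G^\circ$ has finite index in $G$, if $\image \det|_G$ is infinite, then so is $\image \det|_{G^\circ}$, and Theorem \ref{chi infinite} immediately gives $d(G,V) < m = \dim V$.

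For the unipotent statement, suppose $H \neq \{e\}$ and pick a unipotent $u \in G$ with $u \neq e$. Since $u - I$ is nilpotent and $\char K = p$, Frobenius gives $(u-I)^{p^k} = u^{p^k} - I$ for all $k$, so $u$ has order equal to some $p^j$ with $j \geq 1$. Set $w = u^{p^{j-1}}$, an element of order exactly $p$ in $G$, and let $P = \langle w \rangle \subset G$, which is isomorphic to $\Z/p\Z$. Because $w \neq I$ is unipotent on $V$, its Jordan form on $V$ has at least one block of size $\geq 2$; so the restriction of $V$ to $P$, decomposed as in the cyclic-group section, contains some indecomposable summand $V_i$ with $i \geq 1$. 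Theorem \ref{d(P, V)} then yields $d(P, V) < \dim V$. Since $P \subset G$ implies $\TS_G(V^{\otimes n}) \leq \TS_P(V^{\otimes n})$ for every $n$, we conclude
\[
d(G, V) \leq d(P, V) < \dim V,
\]
contradicting the hypothesis $d(G,V) = \dim V$. Hence $H = \{e\}$.

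Both components are built entirely from results already proved, so there is no serious obstacle; the only point worth checking carefully is producing an order-$p$ subgroup inside $G$ acting with a non-trivial Jordan block, which is precisely what the reduction $u \mapsto u^{p^{j-1}}$ accomplishes. After these two arguments, the proposition follows by combining the two implications.
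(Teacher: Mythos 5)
Your proposal is correct and follows essentially the same route as the paper: reduce a nontrivial unipotent element to one of order exactly $p$, apply Theorem \ref{d(P, V)} to the resulting copy of $\Z/p\Z$ together with $d(G,V)\le d(P,V)$, and dispose of the determinant condition via Theorem \ref{chi infinite}. Your explicit check that $\image \left.\det\right|_{G^\circ}$ is infinite whenever $\image \left.\det\right|_G$ is (using finiteness of $[G:G^\circ]$) is a detail the paper leaves implicit, but the argument is the same.
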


\begin{proof}
    For the first part of the necessity condition, we proceed by contradiction. Suppose there is $x\in H$ such that $x$ is not the identity. Then the order of $x$ is $p^r$ for some $r>0$, and we may assume $r=1$. Therefore, $H$ contains a cyclic subgroup of order $p$, which we denote as $P$. Since $d(G, V)\le d(P, V)$, it follows that $d(P, V)=\dim V$ by assumption. By Theorem \ref{d(P, V)}, $P$ acts on $V$ trivially but that gives a contradiction to the construction of $P$.

    The claim that $\image \left.\det\right|_G$ is finite follows from Theorem \ref{chi infinite}.

\end{proof}

Before proving the converse, we first discuss the number of connected components of $G$ under the assumption that $H=\{e\}$.

\begin{prop} \label{orderprimetop}
    Suppose $G$ is an algebraic group such that the only unipotent element is the identity. Let $G^\circ$ be the connected component of $G$ that contains the identity. Then $G/G^\circ$ is a finite group of order prime to $p$.
\end{prop}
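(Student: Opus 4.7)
The plan is to first exploit the Jordan decomposition to show that every element of $G$ is semisimple, and then combine this with the structure of diagonalizable groups in characteristic $p$ to rule out $p$-torsion in $G/G^\circ$.

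Finiteness of $G/G^\circ$ holds for any algebraic group (its elements index the irreducible components), so the real task is to show that this finite group has no element of order $p$. The essential preliminary observation is that every $g \in G$ is semisimple: the Jordan decomposition $g = g_s g_u$ in the ambient $\GL(V)$ has $g_s, g_u \in G$ (a standard fact about closed subgroups of $\GL(V)$), and the hypothesis forces $g_u = e$. In particular, $G^\circ$ is a connected algebraic group whose elements are all semisimple, so $G^\circ$ is a torus. (Standard root-space argument: a nonzero root would yield a nilpotent element of $\operatorname{Lie}(G^\circ)$, and the corresponding one-parameter subgroup would be a nontrivially unipotent subgroup of $G^\circ$.)

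Now suppose for contradiction that $G/G^\circ$ contains an element of order $p$, and lift it to $g \in G$. Let $D$ be the Zariski closure of $\langle g \rangle$ in $G$. Then $D$ is commutative and all its elements are semisimple; since commuting diagonalizable operators on $V$ are simultaneously diagonalizable, after a suitable change of basis $D$ is contained in the diagonal torus of $\GL(V)$, so $D$ is a (reduced) diagonalizable algebraic group. In characteristic $p$, the component group of any such $D$ has order coprime to $p$ — dually, the $p$-primary torsion of the character lattice of $D$ contributes nothing on $K$-points because $K^\times$ has no elements of order $p$.

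To conclude, $D^\circ$ is connected and contains $e$, so $D^\circ \subseteq G^\circ$, and the natural homomorphism $D \to G/G^\circ$ factors through $D/D^\circ$. The order of the image of $g$ in $G/G^\circ$ therefore divides $|D/D^\circ|$, which is coprime to $p$, contradicting the assumption that this image has order $p$. The main obstacle I expect is the invocation of the classification of connected algebraic groups with only semisimple elements as tori; granted that, the remainder is a routine combination of Jordan decomposition and elementary character-lattice bookkeeping for diagonalizable groups.
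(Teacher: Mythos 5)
Your proof is correct, but it takes a genuinely different route from the paper's. Both arguments agree on the setup (Jordan decomposition forces every element of $G$ to be semisimple, so $G^\circ$ is a torus) and both end with the same punchline: a lift $g$ of a hypothetical order-$p$ element of $G/G^\circ$ can be corrected by some $t_0$ in a torus with $t_0^p=g^p$, making $gt_0^{-1}$ a nontrivial element of order $p$, hence unipotent — contradiction. The difference is how the element $t_0$ is produced. The paper works directly inside the possibly noncommutative group: it computes $(ht)^p = h_0^p\, t^{h_0^{p-1}+\cdots+h_0+1}$, encodes conjugation by $h_0$ as an integer matrix $A$ with $A^p=I$, and uses the decomposition $\Q^r=\ker(A-I)\oplus\ker(A^{p-1}+\cdots+I)$ to show the image of the $p$-th power map on the component of $h_0$ is a coset of the connected positive-dimensional torus $C_T(h_0)$, from which $t_0$ is extracted. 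You instead pass to the Zariski closure $D$ of $\langle g\rangle$, observe it is a commutative group of semisimple elements, hence diagonalizable, and invoke the structure theorem that a reduced diagonalizable group over an algebraically closed field of characteristic $p$ is a torus times a finite group of order prime to $p$; this makes the conclusion immediate. Your approach is shorter and avoids the explicit cocycle computation, at the cost of citing the classification of diagonalizable groups (which you justify only by a one-line character-lattice remark; it is standard, e.g.\ Borel or Humphreys, and can also be proved in two lines here: the $p$-th power map is surjective on the subtorus $D^\circ$, and a semisimple element of order $p$ in characteristic $p$ is trivial). The paper's argument is more self-contained but essentially reproves this structure fact in the twisted, nonabelian setting.
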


\begin{proof}
    Consider a Borel subgroup $B$ of $G^\circ$. Since all elements of $G^\circ$ are semisimple, by \cite[Corollary 11.5 (1)]{borel2012linear}, $G^\circ$ is a torus. We have that $G/G^\circ$ is a finite group. It remains to show that $G/G^\circ$ is of order prime to $p$. 

    We prove it by contradiction.
    Write $G^\circ =T$. Suppose $p \mid \lvert G/T \rvert$. Then there exist $h_0\in G \setminus T$ such that $h_0^p \in T$. Denote the irreducible component of $G$ containing $h_0$ by $H$. Consider a morphism of varieties $f:H \to T$ sending $h$ to $h^p$.

    For any $h \in H$, write $h=h_0 t$ for some $t \in T$. Therefore, $$h^p=h_0t h_0 t \cdots h_0 t h_0 t.$$ Since $th_0=h_0 h_0^{-1}th_0=h_0t^{h_0}$ where $t^{h_0}=h_0^{-1} t h_0$,  \begin{align*}
        h^p&=h_0t h_0 t \cdots h_0 (t h_0) t=h_0t h_0 t \cdots h_0 (h_0t^{h_0}) t\\
        &=h_0h_0t h_0 t \cdots h_0 h_0t^{h_0^2}t^{h_0} t\\
        &=\cdots= h_0^p t^{h_0^{p-1}}\cdots t^{h_0^{p-2}} t^{h_0} t.
    \end{align*}

    Denote $t^{h_0^{p-1}}\cdots t^{h_0^{p-2}} t^{h_0} t$ by $t^{h_0^{p-1}+h_0^{p-2}+\cdots + h_0+1}$.

    Suppose $T$ is a torus of rank $r$. Since $T\cong \Gm ^{\X r}$ and $\Hom(\Gm, \Gm)\cong \Z$, it follows that $\Hom(T, T)\cong M_r(\Z).$ 
    
    Since $T \trianglelefteq G$, every $x\in G$ acts on $T$ by conjugation, giving a matrix in $M_r(\Z)$. Let $A$ be a matrix representing the automorphism of conjugation by $h_0$ on $T$. Since $h_0^p \in T$, it follows that $A^p=I$. Hence $A^p-I=0$ and the minimal polynomial of $A$ divides the polynomial $$x^p-1=(x-1)(x^{p-1}+ \cdots +1).$$

    Consider $A: \Q^r \to \Q^r$. Then $\Q^r=V \oplus W$ where $V=\ker(A-I)$ and $W=\ker (A^{p-1}+\cdots + I)$. Hence, $$\image(A^{p-1}+\cdots + I)=(A^{p-1}+\cdots + I)V=V=\ker (A-I).$$

    Let $S$ be the set in $T$ corresponding to $\ker(A-I)$, i.e., $$S=\{t=(t_1, t_2, \cdots, t_r) \mid t^{h_0}=t\}.$$ Hence, $S$ is the centralizer of $h_0$ in $T$, and we denote it by $C_T(h_0)$. By assumption, the identity is the only unipotent element, hence $h_0^p \neq e$. Since $h_0^p \in T$ commutes with $h_0$, it follows that $h_0^p \in C_T(h_0) \supsetneq \{e\}.$

    Since $\image f=h_0^pC_T(h_0)$ and $\image f$ is the image of an irreducible component, $C_T(h_0)$ is connected. Since $C_T(h_0)$ is a closed, connected subgroup of a torus, it is a torus. The rank of $C_T(h_0)$ is larger than zero since it contains at least two elements.

    We have $h_0^p\in C_T(h_0)$ and as $C_T(h_0)$ is a torus, there exists $t_0\in C_T(h_0)$ such that $t_0^p=h_0^p$. Since $t_0$ commutes with $h_0$,  $$(h_0 t_0^{-1})^p=h_0^p (t_0^{-1})^p=h_0^p(h_0^p)^{-1}=e.$$ Therefore, $h_0t_0^{-1}$ is a unipotent element other than the identity, which contradicts the assumption on $G$. Hence, $p \nmid \lvert G/T \rvert$.

\end{proof}

\begin{thm} \label{chi finite first time}
    If $H=\{e\}$ and $\image \left.\det\right|_G$ is of finite order, then $$d(G, V)=m.$$
\end{thm}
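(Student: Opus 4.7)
By Proposition~\ref{orderprimetop}, $G^\circ = T$ is a torus and $F := G/T$ is finite of order prime to $p$. Since $\image \left.\det\right|_G$ is finite and $T$ is connected, $\left.\det\right|_T$ is trivial, so the $T$-weights $\chi_1,\ldots,\chi_m$ of $V$ (listed with multiplicity) sum to $0$ in $X(T)$. The plan is to produce a lower bound $\dim(V^{\xx n})^G \ge C m^n / n^{s/2}$ for some constants $C, s > 0$ and all large $n$; together with the trivial inequality $d(G,V) \le \dim V = m$, this yields $d(G,V) = m$.

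The first step counts $T$-invariants. Fix a weight basis $v_1,\ldots,v_m$ of $V$ and write $v_{\vec a} := v_{a_1}\xx\cdots\xx v_{a_n}$. Then $(V^{\xx n})^T$ has basis $\{v_{\vec a} : \vec a \in \Omega\}$, where $\Omega := \{(a_1,\ldots,a_n) \in [m]^n : \sum_i \chi_{a_i} = 0\}$. Viewing $|\Omega|$ as the return-to-origin count at time $n$ of an i.i.d.\ random walk on $X(T)$ with mean-zero uniform steps drawn from $\{\chi_1,\ldots,\chi_m\}$, a local central limit theorem gives $|\Omega| \ge C m^n / n^{s/2}$, where $s$ is the rank of the sublattice of $X(T)$ generated by the $\chi_i$.

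The second step transfers this bound to $G$-invariants by counting free $F$-orbits on $\Omega$. Since $F$ normalizes $T$, it acts on the $T$-weights of $V$ and hence on $V$ by monomial matrices in the weight basis: $f \cdot v_i = c_{i,f} v_{\sigma_f(i)}$ for some permutation $\sigma_f$ of $\{1,\ldots,m\}$ and scalars $c_{i,f} \in K^*$, so $f \cdot v_{\vec a} = c_{\vec a,f} v_{\sigma_f(\vec a)}$ with $c_{\vec a,f} = \prod_j c_{a_j,f}$ (one checks the scalars are well defined on $\Omega$ independently of the lift, since weights sum to zero). A sequence $\vec a \in \Omega$ has nontrivial $F$-stabilizer only if some $f \ne e$ fixes every $a_i$ pointwise; assuming $F$ acts faithfully on $\{\chi_1,\ldots,\chi_m\}$, each $\sigma_f$ with $f \ne e$ has at most $m-1$ fixed points, so the number of $F$-stabilized $\vec a \in \Omega$ is at most $|F|(m-1)^n$. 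Hence $\Omega$ contains at least $|\Omega| - |F|(m-1)^n \gtrsim m^n/n^{s/2}$ sequences with trivial stabilizer, forming $\gtrsim m^n/(|F| n^{s/2})$ free $F$-orbits. For each free orbit $O$ with base point $\vec b_0$, the basis rescaling $w_g := c_{\vec b_0,g}\, v_{g\cdot \vec b_0}$ makes the span $K^{O} \subseteq (V^{\xx n})^T$ $F$-equivariantly isomorphic to the regular representation $K[F]$; since $|F|$ is invertible in $K$, $\dim(K^{O})^F = \dim K[F]^F = 1$. Summing, $\dim(V^{\xx n})^G = \dim((V^{\xx n})^T)^F \gtrsim m^n/n^{s/2}$, completing the proof.

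The main obstacle is the subcase where $F$ does \emph{not} act faithfully on the weight set $\{\chi_i\}$: some $f \in F\setminus\{e\}$ then fixes every $\chi_i$ and acts only by nontrivial scalars on the weight spaces, so $\Omega$ may admit no free orbit and the count above breaks down. The intended remedy is to replace $T$ by the larger diagonalizable subgroup $T':= G \cap D$, where $D$ is the diagonal torus of $\GL(V)$ in the weight basis; the quotient $F' := G/T'$ then acts faithfully on the $T'$-weights of $V$, so the argument runs with $X(T')$ in place of $X(T)$. As $\left.\det\right|_{T'}$ is only required to be a finite-order character of $T'/T$, the sum of $T'$-weights of $V$ may be nonzero but torsion in $X(T')$, forcing one to restrict $n$ to a fixed arithmetic progression so the local limit theorem still delivers the $m^n/n^{s/2}$ return estimate; this is enough for the $\limsup$ defining $d(G,V)$.
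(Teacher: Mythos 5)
Your first two steps (reducing to the torus $T=G^\circ$ via Proposition \ref{orderprimetop}, observing that $\left.\det\right|_T$ is trivial so the $T$-weights of $V$ sum to zero, and estimating the dimension of the zero-weight space of $V^{\otimes n}$ by a mean-zero random walk) are sound and parallel to the paper, which gets the same lower bound by comparing with $\TS_{\SL(V_\C)}(V_\C^{\otimes mn})$. The place where your argument breaks is the treatment of the finite quotient $F=G/T$. Your orbit count presupposes that $F$ acts on $V$ by \emph{monomial} matrices in a $T$-weight basis, i.e.\ that each lift of $f$ sends $v_i$ to a scalar multiple of some $v_{\sigma_f(i)}$. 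This holds only when the $T$-weight spaces of $V$ are all one-dimensional. If a weight occurs with multiplicity greater than one, an element of $G$ stabilizing that weight space can act on it by an arbitrary invertible matrix, so there is no permutation $\sigma_f$ of the index set and the whole decomposition of $\Omega$ into free $F$-orbits is undefined. The extreme case is $T=\{e\}$ (e.g.\ $G$ a finite group of order prime to $p$ acting irreducibly and non-monomially on $V$): then $\Omega=[m]^n$, every weight is zero, and your second step says nothing.

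Your closing paragraph acknowledges a related difficulty (non-faithfulness of $F$ on the weight \emph{set}) but the proposed repair $T'=G\cap D$ does not work: $G$ need not normalize the diagonal torus $D$ of $\GL(V)$, so $T'$ need not be normal in $G$ and $G/T'$ is not a group; and even where it is, $G$ still need not act monomially on a weight space of multiplicity $>1$. The paper's proof handles exactly this point by genuinely representation-theoretic means rather than combinatorics: since $p\nmid\lvert\Sigma\rvert$ for $\Sigma=G/T$, the zero-weight summand $\widetilde{W}_n$ is a semisimple $\Sigma$-module and one may work in $R_\C(\Sigma)=R_K(\Sigma)$; a faithful $\Sigma$-representation tensor-generates the regular representation $V_\Sigma$ (Proposition \ref{faithproduceseverything} and Lemma \ref{producesregularrep}), and the absorption identity $V_\Sigma\otimes W\cong V_\Sigma^{\oplus\dim W}$ then yields $\dim\widetilde{W}_{n_k}$ trivial summands after tensoring with one bounded extra factor $V^{\otimes mn_1d_1M}$. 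To complete your proof you would need to replace the free-orbit count by an argument of this kind (or by a Brauer-character/trace estimate for $\dim E_0^F$), since no choice of basis makes the $F$-action combinatorial in general.
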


\begin{proof}
    By the definition of $d(G, V)$, for any $k\in \N$, $$d(G, V)\ge d(G, V^{\xx k})^{1/k}.$$ On the other hand, since $\TS_G(V^{\xx nk}) \ge \TS_G(V^{\xx n})^k$, it follows that $$d(G, V^{\xx})^{1/k} \ge d(G, V).$$ Hence, \[d(G, V)=d(G, V^{\xx k})^{1/k}.\]

    Suppose $T:V \to V$, $S: W \to W$ are two linear transformations and $\dim V=n$, $\dim W=m$. Then $$\det (T \xx S)= \det (T\xx \id_W \circ \id_V \xx S)=\det (T\xx \id_W)  \det (\id_V\xx S).$$ Since one may represent $T\xx \id_W$ as a matrix of $m$ blocks on the diagonal where each block is a matrix representing $T$, it follows that $$\det (T\xx \id_W)=\det (T)^m.$$ Therefore, $\det(T \xx S)= \det(T)^m  \det(S)^n$.

    Now suppose $\image \det|_G $ is of order $k$. Then for any $g$, $\det (g)$ is a $k$-th root of unity. 
    \begin{align*}
        \det \bigl(g^{\xx k}\bigr)&= \det \bigl(g \xx \id_{V^{\xx (k-1)}}\bigr) \cdots \det\bigl(\id_{V^{\xx (k-1)}} \xx g\bigr)\\& =\Bigl(\det(g)^{m^{k-1}}\Bigr)^k=\bigl(\det (g)^k\bigr)^{m^{k-1}}=1.
    \end{align*}

    Therefore, $\rho^{\xx k}:G \to \SL\bigl(V^{\xx k}\bigr) \subset \GL\bigl(V^{\xx k}\bigr)$. Since $d(G, V)=d\bigl(G, V^{\xx k}\bigr)^{1/k}$, $d(G, V)=m$ is equivalent to $d\bigl(G, V^{\xx k}\bigr)=m^{k}$. Now we may assume $G \subset \SL(V)$ and the only unipotent element of $G$ is the identity, and we want to show $d(G, V)= \dim V=m$.   

    Let $G^\circ$ be the connected component of $G$ containing the identity. Since all elements of $G^\circ$ are semisimple, $G^\circ$ is a torus, and we denote it by $T$. Decompose $V^{\xx mn}$ into a direct sum (as vector spaces) of $T$-eigenspaces $E_\l$. Since $T$ is a normal subgroup of $G$,  $G$ permutes $E_\l$ because $tg=gt^g$ where $t^g=g^{-1}tg$ and for $v\in E_\l$, $gv \in E_{\l^g}$ where $\l^g(t)=\l(t^g)$ since $t(gv)=gt^gv=g\l(t^g)v=\l(t^g)(gv)$. 
    
    For $\l \neq 0 \in X(T)$, $gE_\l \neq E_0$ by the difference of the dimensions of the kernel of $\l^g$ and the kernel of the trivial character $0$. Hence, $E_0$ is $G$-invariant and a direct summand of $V^{\xx mn}$, and it induces a representation of $\Sigma \coloneq G/T$. 

    Consider a general situation for the moment. Let $W$ be an $m$-dimensional vector space over an algebraically closed field $F$ of any \ch . Let $D$ be the group of diagonal matrices in $\SL(W)$. The number of $\TS_D\bigl(W^{\xx nm}\bigr)$ is equal to the dimension of the weight zero subspace of $W^{\xx mn}$. This number is the number of $mn$-tuple $(a_1, \cdots, a_{mn})$ where $a_i \in \{1, \cdots, m\}$ such that $(n_1, \cdots, n_m)=(n,\cdots, n)$ where $n_i$ is the number of times $i$ occurs in $\{a_k\}_{k=1}^{mn}$. This number is a purely combinatorial answer so it does not depend on the \ch\ of $F$. Therefore, we are free to assume $F$ is an algebraically closed field of any characteristic, and compare the number $\TS_D\bigl(W^{\xx nm}\bigr)$ with the numbers of trivial summands corresponding to different groups under different characteristic settings.

    Back to our setting of Theorem \ref{chi finite first time}, let $D$ be the maximal torus consisting of the diagonal matrices in $\SL(V)$. Denote the weight zero subspace of $V^{\xx mn}$ corresponding to $D$ by $W_{n}$. We may assume $D \supset T$; then the subspace of $V^{\xx mn}$ fixed by $T$ contains $W_{n}$, and we denote it by $\widetilde{W}_{n}$. Since $\widetilde{W}_{n}$ is $G$-invariant and every element of $\widetilde{W}_{n}$ is fixed by $T$,  $\widetilde{W}_{n}$ is a $\Sigma$-representation. By Proposition \ref{orderprimetop}, $p \nmid \lvert \Sigma \rvert$; hence, $\widetilde{W}_{n}$ is a semisimple $\Sigma$-representation, therefore, a semisimple $G$-representation. Hence, any $\Sigma$-subrepresentation in $\widetilde{W}_{n}$ is a direct summand in $V^{\xx mn}$ as a $G$-representation.

    By the definition of $\widetilde{W}_k$, $$\widetilde{W}_k\xx \widetilde{W}_l \subset \widetilde{W}_{k+l}\subset V^{\xx m(k+l)}.$$

     If we find a sequence $(n_k)_k$ such that $\widetilde{W}_{n_k}$ contains enough trivial $\Sigma$-representations, then the result is proved. Since $\Sigma$ is a finite group, there are only finitely many different normal subgroups. Hence, after taking a finite quotient of $\Sigma$, there exists a sequence $(n_k)_k$ such that $\rho_k: \Sigma \to \GL\Bigl(\widetilde{W}_{n_k}\Bigr)$ is a faithful $\Sigma$-representation. 

     If $\Sigma$ is the trivial group, then it is done since $\TS_\Sigma\Bigl(\widetilde{W}_{n_k}\Bigr)=\dim \widetilde{W}_{n_k}$ and      
     $$\TS_G(V^{\xx mn_k}) =\TS_\Sigma(\widetilde{W}_{n_k})=  \dim \widetilde{W}_{n_k} \ge \dim W_{n_k}\ge \TS_{\SL(V_\C)}(V_\C^{\xx mn_k}),$$ where $V_\C$ is an $m$-dimensional vector space over $\C$, hence $d(G, V)=m$.

     Suppose $\Sigma$ is not trivial. Since $\Sigma$ is of order prime to $p$ and $K$ is algebraically closed, by \cite[Proposition 43]{serre1977linear} $R_K(\Sigma)=R_\C(\Sigma)$, where $R_L(\Sigma)$ is the Grothendieck ring of $\Sigma$ over the field $L$. Therefore, we may assume $\widetilde{W}_{n_k}$ is a representation of $\Sigma$ over $\C$.

    Consider the sequence $(\widetilde{W}_{n_k})_k$. For each $\widetilde{W}_{n_k}$, by Proposition \ref{faithproduceseverything}, there exists $d_k$ such that $\widetilde{W}_{n_k}^{\xx d_k}$ contains a trivial representation as a $\Sigma$-representation. 

    If there are infinitely many $k$ such that $\widetilde{W}_{n_k}^{\xx d_k}$ is a direct sum of trivial representations, then it is done for the following reason: take a subsequence of $(n_kd_k)_k$, say $(n_{k_i}d_{k_i})_i$, such that $n_{k_i} d_{k_i} >n_{k_j} d_{k_j}$ and $n_{k_i}>n_{k_j}$ whenever $i>j.$ Consider the sequence $(V^{\xx mn_{k_i} d_{k_i}})_i$. Since $$\widetilde{W}_{n_k}^{\xx d} \subset \widetilde{W}_{dn_k}\subset V^{\xx mdn_k},$$ it follows that \begin{align*}
        \TS_G\bigl(V^{\xx mn_{k_i} d_{k_i}}\bigr)= \TS_\Sigma \Bigl(\widetilde{W}_{{n_k}_i {d_k}_i}\Bigr) \ge \TS_\Sigma \Bigl(\widetilde{W}_{{n_k}_i}^{\xx {d_k}_i}\Bigr)=\Bigl(\dim \widetilde{W}_{{n_k}_i}\Bigr)^{{d_k}_i},
    \end{align*} 
    where the last equality follows from the condition that $\widetilde{W}_{n_k}^{\xx d_k}$ is a direct sum of trivial representations. Therefore, 
    
    \begin{align*}
    \limsup_{i \to \infty}\TS_G\bigl(V^{\xx mn_{k_i}d_{k_i}}\bigr)^{1/mn_{k_i} d_{k_i}} &\ge \limsup_{i \to \infty}\Bigl(\dim \widetilde{W}_{{n_k}_i}\Bigr)^{{{d_k}_i}/mn_{k_i} d_{k_i}}\\
    =\limsup_{i \to \infty}\bigl(\dim \widetilde{W}_{{n_k}_i}\bigr)^{1/mn_{k_i}} &\ge \limsup_{i \to \infty}\bigl(\dim W_{{n_k}_i}\bigr)^{1/mn_{k_i}}=m.
\end{align*}

    Otherwise, there exist infinitely many $k$ such that $\widetilde{W}_{n_k}^{\xx d_k}$ contains a trivial summand but itself is not a direct sum of trivial representations. Since $\Sigma$ is a finite group, there exists a normal subgroup $N$ and a subsequence $(n_{k_l})_l$ such that $N$ is the kernel of $\pi_l: \Sigma \to \GL\Bigl(\widetilde{W}_{n_{k_l}}^{\xx d_{k_l}}\Bigr)$. Therefore the induced $\pi_l: \Sigma / N \to \GL\Bigl(\widetilde{W}_{n_{k_l}}^{\xx d_{k_l}}\Bigr)$ is a faithful $\Sigma/N$-representation. 

    Replace $\Sigma$ with $\Sigma/N$ and $n_l$ with $n_{k_l}$. By Lemma \ref{producesregularrep}, there exists some $M$ such that  $\Bigl(\widetilde{W}_{n_1}^{\xx d_1}\Bigr)^{\xx M}$ contains the regular representation $V_\Sigma$ of $\Sigma$. Since $\widetilde{W}_{n_k}^{\xx d_k}$ is a $\Sigma$-representation, $\Bigl(\widetilde{W}_{n_1}^{\xx d_1}\Bigr)^{\xx M}\xx \widetilde{W}_{n_k}^{\xx d_k} \supset V_\Sigma \xx \widetilde{W}_{n_k}^{\xx d_k}=V_\Sigma^{\oplus c_k^ {d_k}}$ where $c_k=\dim \widetilde{W}_{n_k}$ by Remark \ref{tensorwithreg}.  
    
    Consider the sequence $\Bigl(V^{\xx mn_1d_1M}\xx V^{\xx m n_kd_k}\Bigr)_k$. Then $$V^{\xx mn_1d_1M}\xx V^{\xx m n_kd_k} \supset \Bigl(\widetilde{W}_{n_1}^{\xx d_1}\Bigr)^{\xx M}\xx \widetilde{W}_{n_k}^{\xx d_k} \supset V_\Sigma^{\oplus c_k^ {d_k}}.$$ 
    
    Then $\TS_G\Bigl(V^{\xx m(n_1d_1M+n_k d_k)}\Bigr)\ge  \Bigl(\dim \widetilde{W}_{n_k}\Bigr)^{d_k} \ge (\dim W_{n_k})^{d_k}$. Since $\dim W_{n_k}$ equals the dimension of the weight zero subspace of $V_\C^{\xx m n_k}$ corresponding to the diagonal subgroup of $\SL(V_\C)$, which we denote by $D'$,
    \begin{align*}
    &\TS_G\Bigl(V^{\xx m(n_1d_1M+n_k d_k)}\Bigr)^{1/m(n_1d_1M+n_k d_k)}\\ &\ge \TS_{D'}\bigl(V_\C^{\xx mn_k}\bigr)^{d_k/m(n_1d_1M+n_k d_k)} \\ &\ge \TS_{\SL(V_\C)}(V_\C^{\xx m n_k})^{d_k/m(n_1d_1M+n_k d_k)}.
    \end{align*} Since $n_1d_1M$ is fixed, 
    
    \begin{align*}
    &\limsup_{k \to \infty}\TS_G\Bigl(V^{\xx m(n_1d_1M+n_k d_k)}\Bigr)^{1/m(n_1d_1M+n_k d_k)}\\ &\ge \limsup_{k \to \infty} \TS_{\SL(V_\C)}\bigl(V_\C^{\xx m n_k}\bigr)^{d_k/m(n_1d_1M+n_k d_k)}\\
    &=\limsup_{k \to \infty} \TS_{\SL(V_\C)}\bigl(V_\C ^{\xx m n_k}\bigr)^{1/mn_k} \\ &= m.  
    \end{align*}

\end{proof}

Theorem \ref{main_char_p} follows from Proposition \ref{char_p_easy_direction},  Theorem \ref{chi finite first time}, and the observation used when proving Theorem \ref{main_char_zero}.

\begin{appendices}
\section{Decompositions of tensor products of modular representations of the cyclic group of order $p$} \label{appendix: decomp}

Let $p$ be an odd prime. 
In this section, we give a formula for the decomposition of tensor products of modular representations of $\Z/p\Z$.

Let $V_1=\text{Span}_k (x, y)$ be the indecomposable representation of dimension $2$ where $g\cdot x=x,\ g\cdot y=x+y$. Denote $\text{Sym}^i\, V_1$ by $V_i$. Then $V_i$ is the unique indecomposable representation of dimension $i+1$ up to isomorphism for $0 \le i \le p-1.$

\begin{prop} \label{decomVp-1}
    $$V_{p-1}\xx V_i=(i+1)V_{p-1}.$$
\end{prop}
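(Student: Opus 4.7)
The plan is to identify $V_{p-1}$ with the regular representation of $G=\Z/p\Z$ and then exploit the standard fact that tensoring with the regular representation produces a free module whose rank equals the dimension of the other factor.

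First I would argue that $V_{p-1}\cong KG$ as a $G$-module. Since $\char K = p$, the group algebra is $KG\cong K[x]/(x^p-1) = K[x]/(x-1)^p$, which is a local ring. Hence $KG$ is itself the unique indecomposable projective $KG$-module, and in particular it is indecomposable of dimension $p$. But $V_{p-1}$ is also indecomposable of dimension $p$, and by the Jordan-form classification recalled earlier in the excerpt, $V_0,V_1,\ldots,V_{p-1}$ exhaust the indecomposable representations up to isomorphism. Therefore $V_{p-1}\cong KG$.

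Next I would use the isomorphism
\[
\phi: KG \xx V_i \longrightarrow KG \xx V_i, \qquad g\xx v \longmapsto g\xx g^{-1}v,
\]
extended $K$-linearly. On the left the $G$-action is the diagonal one (i.e.\ the tensor product of representations). On the right the $G$-action is by left multiplication on the first factor and trivially on the second; a direct check on basis vectors shows $\phi$ intertwines the two actions. The right-hand side is visibly isomorphic to $KG^{\oplus \dim V_i}$ as a $G$-module, so
\[
V_{p-1}\xx V_i \;\cong\; KG\xx V_i \;\cong\; (KG)^{\oplus (i+1)} \;\cong\; (i+1)V_{p-1},
\]
which is the desired decomposition. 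Counting dimensions confirms it: the left side has dimension $p(i+1)$ and the right side has dimension $(i+1)\cdot p$.

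There is no serious obstacle here; the only thing to keep straight is the twist in the definition of $\phi$, which is what converts the diagonal action on $KG\xx V_i$ into the left-multiplication action on a free module. Everything else is just an identification of indecomposables by dimension plus a dimension count.
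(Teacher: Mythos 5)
Your proof is correct, and it takes a somewhat more explicit route than the paper's. The paper argues abstractly: $V_{p-1}$ is free over $K[G]$, hence projective; the tensor product of a projective module with any module is projective (citing Serre); and since $V_{p-1}$ is the unique indecomposable projective, a dimension count forces $V_{p-1}\otimes V_i\cong (i+1)V_{p-1}$. You instead identify $V_{p-1}\cong K[G]$ directly (via the local-ring/Jordan-block observation) and then write down the untwisting isomorphism $g\otimes v\mapsto g\otimes g^{-1}v$, which converts the diagonal action into left multiplication on the first factor and exhibits $K[G]\otimes V_i$ as a free module of rank $\dim V_i$ on the nose. The two arguments share their essential input --- that $V_{p-1}$ is the regular representation --- but yours is self-contained: it bypasses the classification of indecomposable projectives, the Krull--Schmidt theorem, and the external citation, at the cost of checking one explicit intertwiner (which you do correctly; the computation $\phi(hg\otimes hv)=hg\otimes g^{-1}v=h\cdot\phi(g\otimes v)$ goes through). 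Either version is perfectly acceptable here.
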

\begin{proof}
    Since $V_{p-1}$ is a free $k[G]$-module, it is projective. By \cite[Section 14.2]{serre1977linear}, $V_{p-1}\xx V_i$ is also projective. Since every indecomposable projective $k[G]$-module is isomorphic to $V_{p-1}$, it follows that $V_{p-1}\xx V_i \cong (i+1)V_{p-1}.$
\end{proof}

\begin{prop} \label{decomV1}
\[V_d\xx V_1 =\begin{cases}
    V_1,\ d=0\\
    V_{d+1}\oplus V_{d-1},\ 0<d<p-1\\
    2V_{p-1},\ d=p-1.
\end{cases} \]
\end{prop}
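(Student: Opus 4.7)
I would handle the three cases separately. The case $d = 0$ is immediate since $V_0$ is trivial. The case $d = p-1$ follows directly from \cref{decomVp-1} with $i = 1$. The nontrivial content is the middle case $0 < d < p - 1$, on which I will focus.

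For this case, I would identify $V_d$ with $\mathrm{Sym}^d V_1$ and construct a short exact sequence of $G$-modules
\[0 \to V_{d-1} \xrightarrow{\phi} V_d \xx V_1 \xrightarrow{m} V_{d+1} \to 0,\]
where $m(f \xx v) = fv$ is multiplication in the symmetric algebra (surjective and obviously $G$-equivariant), and $\phi: V_{d-1} \to V_d \xx V_1$ is defined by $\phi(f) = fx \xx y - fy \xx x$. I would then verify: (i) $\phi$ is $G$-equivariant — the essential point is that $x \xx y - y \xx x$ is $G$-invariant in $V_1 \xx V_1$ because $g$ acts on $V_1$ with determinant $1$, so $\phi$ factors as $f \mapsto f \xx (x \xx y - y \xx x)$ followed by multiplication on the first two tensor factors; (ii) $\phi$ is injective, seen by writing $f = \sum a_i x^{d-1-i} y^i$ and observing that the $d$ terms $x^{d-i} y^i \xx y$ appearing in $\phi(f)$ are linearly independent basis vectors; (iii) $\phi(V_{d-1}) \subseteq \ker m$, since $fxy - fyx = 0$ in the commutative algebra $\mathrm{Sym}^\bullet V_1$. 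A dimension count, $\dim V_{d-1} + \dim V_{d+1} = d + (d+2) = 2(d+1) = \dim(V_d \xx V_1)$, forces $\phi(V_{d-1}) = \ker m$ and yields the exact sequence.

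To split it, I would produce an explicit $G$-equivariant section $s: V_{d+1} \to V_d \xx V_1$, given on monomials by
\[s(x^a y^b) = \frac{a}{d+1}\, x^{a-1} y^b \xx x + \frac{b}{d+1}\, x^a y^{b-1} \xx y, \qquad a+b = d+1.\]
The hypothesis $d < p - 1$ enters precisely here: it ensures $d + 1$ is invertible in $K$, so that $s$ is well-defined. The identity $m \circ s = \mathrm{id}_{V_{d+1}}$ is immediate on monomials, and the decomposition $V_d \xx V_1 = \phi(V_{d-1}) \oplus s(V_{d+1}) \cong V_{d-1} \oplus V_{d+1}$ follows.

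The step I expect to require the most actual calculation is the verification that $s$ is $G$-equivariant; after expanding $g \cdot s(x^a y^b)$ and $s(g \cdot x^a y^b)$ in the monomial basis, the two expressions match because of the standard binomial identity $(j+1)\binom{b}{j+1} = b\binom{b-1}{j}$. All other steps reduce to dimension counting and symbolic manipulation; the one conceptual input is simply that $g$ acts through $\mathrm{SL}(V_1)$, which is what makes the antisymmetrizer $x \xx y - y \xx x$ invariant and which geometrically explains why the characteristic $0$ decomposition persists whenever $d+1$ remains invertible.
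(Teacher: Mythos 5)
Your proof is correct, but it takes a genuinely different route from the paper. The paper proves the middle case by brute-force Jordan theory: it filters $V_d\xx V_1$ by the subspaces $W_l$, computes the action of $T=g-e$ on the monomial basis, exhibits $v=y^d\xx y$ with $T^{d+1}(v)=(d+1)!\,x^d\xx x\neq 0$ to produce a Jordan block of size $d+2$, and then analyzes the quotient via $u=y^d\xx x$ to identify the complement as $V_{d-1}$. You instead build the Clebsch--Gordan short exact sequence $0\to V_{d-1}\to V_d\xx V_1\to V_{d+1}\to 0$ from the multiplication map and the invariant antisymmetrizer $x\xx y-y\xx x$ (invariant because $g$ is unipotent, hence acts with determinant $1$), and split it with the polarization section $s=\frac{1}{d+1}\Delta_{d,1}$. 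All the verifications you list go through; in particular, the equivariance of $s$ need not even be checked coefficient by coefficient, since $s$ is $\frac{1}{d+1}$ times the $(d,1)$-component of the comultiplication $\mathrm{Sym}^{d+1}V_1\to \mathrm{Sym}^d V_1\xx V_1$, which is automatically $G$-equivariant. Your argument is shorter, isolates cleanly where the hypothesis $d<p-1$ enters (invertibility of $d+1$ in $K$), and works verbatim for any group acting on $V_1$ through $\SL_2$; the paper's computation, while more laborious, is self-contained at the level of linear algebra and its filtration machinery is reused to identify the quotient. One implicit point common to both proofs: you must still invoke the fact (stated at the start of the appendix) that $\mathrm{Sym}^{d\pm1}V_1$ is the indecomposable of the corresponding dimension for $d\pm 1\le p-1$, so that the two summands you produce are the $V_{d\pm1}$ of the statement.
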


\begin{proof}
For $d=0$, it holds since $V_0$ is the trivial representation.

For $d=p-1$, this follows from Proposition \ref{decomVp-1}.

For $0<d<p-1$, write $V_d=\text{Span}_K(x^d, x^{d-1}y, x^{d-2}y^2, \cdots, y^d).$ Then \begin{align*}
    V_d \xx V_1=\text{Span}_K (&x^d\xx x, x^{d-1}y\xx x, \cdots, y^d \xx x, \\ 
&x^d \xx y, x^{d-1}\xx y, \cdots, y^d \xx y).
\end{align*}

Let $W_l$ be the subspace with the basis $\{x^{d-i}y^i \xx x^{1-j}y^j \mid i+j\le l\}$ and $W_{-1}=\{0\}$.  Then $W_{-1} \subset W_0 \subset W_1 \subset \cdots \subset W_d \subset W_{d+1}=V_d \xx V_1$. Let $T$ be the action of $(g-e)$ on $V_d \xx V_1$. Then $T(W_l)\subset W_{l-1}$ by direct computation and $T^{d+2}=0$.

For the later use, we have \begin{equation}\label{1}
        \begin{split}
            T(x^{d-l}y^l \xx x) &\equiv lx^{d-l+1} y^{l-1} \xx x \qquad \text{(mod $W_{l-2}$}), \\
     T(x^{d-l+1}y^{l-1} \xx y) &\equiv x^{d-l+1} y^{l-1} \xx x + (l-1) x^{d-l+2} y^{l-2} \xx y  \quad \text{(mod $W_{l-2}$)}.
        \end{split}
    \end{equation}

Showing $V_{d+1}$ is a direct summand of $V_d\xx V_1$ is equivalent to showing that one of the Jordan blocks is of size $d+2$. Since $T^{d+2}=0$, all sizes of Jordan blocks of $T$ are less than or equal to $d+2$. Hence, it suffices to find a vector $v$ such that $T^{d+1}(v)\neq 0$. Let $v=y^d\xx y$. We first use induction and (\ref{1}) to show that
\begin{equation} \label{2}
        \begin{split}
         T^k (v) \equiv kd(d-1)(d-2)\cdots (d-k+2)x^{k-1}y^{d-k+1}\xx x \\ + d(d-1)(d-2)\cdots (d-k+1)x^k y^{d-k}\xx y \quad \text{(mod $W_{d-k}$)}    
        \end{split}
\end{equation}

when $d \ge k \ge 2$, and the result follows from applying $T$ to $T^d(v)$.

Since $T$ sends $W_l$ to $W_{l-1}$, $T$ sends $W_l/W_{l-2}$ to $W_{l-1}/W_{l-3}$.

By (\ref{1}), we have $[T(v)]= [y^d\xx x + dxy^{d-1}\xx y] \in W_d/W_{d-1}$.

Hence, \begin{align*}
    [T^2 (v)]&=[T(y^d\xx x + dxy^{d-1}\xx y)]\\ &=[dxy^{d-1}\xx x + d(d-1)x^2 y^{d-2}\xx y +dxy^{d-1}\xx x]\\&=[2dxy^{d-1}\xx x + d(d-1)x^2y^{d-2}\xx y] \in W_{d-1}/W_{d-2}.
\end{align*}

Suppose the formula (\ref{2}) holds up to $k=n$.

For $k=n+1$, \begin{align*}
    [T^{n+1}(v)]&=[T(T^n(v))]    \\&=[T(nd(d-1)\cdots (d-n+2)x^{n-1}y^{d-n+1}\xx x\\ &+ d(d-1)\cdots (d-n+1)x^n y^{d-n}\xx y)]\\&=[nd(d-1)\cdots(d-n+1)x^ny^{d-n}\xx x\\ &+(d(d-1)\cdots (d-n+1)x^ny^{d-n}\xx x\\ &+d(d-1)\cdots (d-n) x^{n+1}y^{d-n-1}\xx y)]\\&=[(n+1)d(d-1)\cdots(d-n+1) x^ny^{d-n}\xx x \\ &+d(d-1)\cdots (d-n)x^{n+1}y^{d-n-1} \xx y)] \in W_{d-n}/W_{d-n-1}.
\end{align*} 
Hence, the formula holds up to $k=d$.

Then, $T^d(v)=d\cdot d!\, x^{d-1}y\xx x+d!\, x^d \xx y +cx^d \xx x$ for some $c$, and $T^{d+1}(v)=d!\, x^d\xx x +d d!\, x^d\xx x=(d+1)!\, x^d\xx x$. Therefore, $T^{d+1}(v) \neq 0$ when $d<p-1$.

The set $\{v, T(v), \cdots, T^{d+1}(v)\}$ determines a Jordan block of size $d+2$. Let $W$ be the space spanned by these vectors. Write $V_d\xx V_1=W \oplus U$. Then $(V_d\xx V_1) /W \cong U$. 

Let $u=y^d\xx x$. Since $g\cdot x= x$ and $$T^i(u)\equiv d(d-1)\cdots (d-i+1) x^i y^{d-i}\xx x \quad \text{(mod $W_{d-i-1}$)},$$ one has $$T^i(u)=d(d-1)\cdots (d-i+1) x^i y^{d-i}\xx x + \sum_{j>i}a_j x^j y^{d-j}\xx x$$ and $T^d(u)=d!\, x^d \xx x$. Comparing $T^i(u)$ and the equation $(\ref{2})$, one has that the set $\{[u], [T(u)], \cdots [T^{d-1}(u)]\}$ in $(V_d\xx V_1)/W$ is linear independent. Therefore, $U \cong V_{d-1}$ and $V_d\xx V_1 \cong V_{d+1}\oplus V_{d-1}$.

\end{proof}

\begin{prop} \label{decomVm}
    \[V_m \xx V_n=
    \begin{cases}
        V_{\lvert m-n \rvert} \oplus V_{\lvert m-n+2 \rvert} \oplus \cdots \oplus V_{\lvert m+n \rvert},\ \text{when } m+i \le p-1, \\

        dV_{p-1} \oplus V_{m-d}\xx V_{n-d},\ \text{when } m+n=p-2+d \ \text{and}\ d \ge 1.
    \end{cases}\]
\end{prop}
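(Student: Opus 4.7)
My plan is to prove the formula by induction on $n$, assuming without loss of generality (by symmetry of $\otimes$) that $n \le m$. The case $m = p-1$ is handled directly by Proposition \ref{decomVp-1}, so from now on I assume $m \le p - 2$. The base cases $n = 0$ and $n = 1$ follow from Proposition \ref{decomV1}. For the inductive step, the key identity is $V_n \otimes V_1 \cong V_{n-1} \oplus V_{n+1}$ (Proposition \ref{decomV1}, valid for $0 < n < p-1$), which by associativity yields
\[
(V_m \otimes V_n) \otimes V_1 \;\cong\; (V_m \otimes V_{n-1}) \,\oplus\, (V_m \otimes V_{n+1}).
\]
By the Krull--Schmidt theorem, $V_m \otimes V_{n+1}$ is uniquely determined once the other three modules are, so it suffices to compute the left side from the inductive hypothesis for $V_m \otimes V_n$ and cancel the summand $V_m \otimes V_{n-1}$, also given by the inductive hypothesis.

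I split the inductive step into three cases. \textbf{Case (i):} $m + n + 1 \le p - 1$. Both inductive-hypothesis decompositions lie in the first case, and tensoring the formula for $V_m \otimes V_n$ with $V_1$ gives a telescoping sum; cancelling $V_m \otimes V_{n-1}$ then yields the claimed first-case formula for $V_m \otimes V_{n+1}$. \textbf{Case (ii):} $m + n + 1 = p$. The summand $V_{p-1}$ appears once in $V_m \otimes V_n$ and contributes $2V_{p-1}$ after tensoring with $V_1$ (Proposition \ref{decomVp-1}); after cancellation one obtains the second-case formula for $V_m \otimes V_{n+1}$ with $d = 2$. \textbf{Case (iii):} $m + n = p - 2 + d$ with $d \ge 2$. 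Here the inductive hypotheses read $V_m \otimes V_n = dV_{p-1} \oplus V_{m-d} \otimes V_{n-d}$ and $V_m \otimes V_{n-1} = (d-1)V_{p-1} \oplus V_{m-d+1} \otimes V_{n-d}$. Tensoring the first with $V_1$ and using both $V_{p-1} \otimes V_1 \cong 2V_{p-1}$ and the associativity expansion $V_{m-d} \otimes V_{n-d} \otimes V_1 \cong V_{m-d-1} \otimes V_{n-d} \oplus V_{m-d+1} \otimes V_{n-d}$ (obtained from $V_{m-d} \otimes V_1 \cong V_{m-d-1} \oplus V_{m-d+1}$), then subtracting the decomposition of $V_m \otimes V_{n-1}$, causes the $V_{m-d+1} \otimes V_{n-d}$ summand to cancel, leaving $(d+1)V_{p-1} \oplus V_{m-d-1} \otimes V_{n-d}$. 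This is precisely the claimed second-case formula for $V_m \otimes V_{n+1}$ with parameter $d + 1$.

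The main obstacle is careful bookkeeping of boundary indices in Case (iii): the expansion $V_{m-d} \otimes V_1 \cong V_{m-d-1} \oplus V_{m-d+1}$ requires $0 < m - d < p - 1$, and the degenerate situations $n - d = 0$ (which forces $m = p - 2$) or $m - d = 0$ (which forces $m = n = p - 2$) must be treated separately. In each such case either the collapsed product $V_0 \otimes V_k = V_k$ is consistent with the formula, or Proposition \ref{decomVp-1} applies directly since a $V_{p-1}$ factor reappears. Additionally, the absolute value $|m - n|$ in the first-case formula requires slight care when $m = n$, but this introduces only notational adjustments rather than new arguments.
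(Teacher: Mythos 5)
Your proof is correct and takes essentially the same route as the paper: both rest on the recursion $V_{n+1}\oplus V_{n-1}\cong V_n\xx V_1$ from Proposition \ref{decomV1} together with $V_{p-1}\xx V_i\cong (i+1)V_{p-1}$, and both proceed by induction (the paper splits this into an induction on $n$ for the first regime and an induction on $d$ for the second, which matches your cases (i)--(iii)). Your explicit appeal to Krull--Schmidt to justify cancelling the summand $V_m\xx V_{n-1}$ is the rigorous form of the paper's subtraction in the representation ring, so there is no substantive difference.
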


\begin{proof}
    The formula for $V_m \xx V_n$ breaks into cases (a) $0 \le m+n \le p-1$ and (b) $m+n=p-2+d$ and $d\ge 1$. 

    For (a):
    first we assume $m \ge n$ and prove by induction on $n$.

    When $n=0$, it holds.
    Suppose the formula for (a) holds up to $n=k$. For $n=k+1$ and suppose $m+k+1 \le p-1$  since $V_n=V_{n-1} \xx V_1 - V_{n-2}$ by Proposition \ref{decomV1},

    \begin{align*}
        V_m \xx V_n &= V_m \xx (V_{n-1} \xx V_1 - V_{n-2})\\
        &=\begin{multlined}[t]
            (V_{ m-n+1 } \oplus V_{ m-n+3 } \oplus \cdots \oplus V_{ m+n-1 })\xx V_1 \\
            -(V_{ m-n+2 } \oplus V_{ m-n+4 } \oplus \cdots \oplus V_{ m+n-2 })
        \end{multlined} \\
        &=V_{ m-n } \oplus V_{ m-n+2 } \oplus \cdots \oplus V_{ m+n }.
    \end{align*}
      Exchange $n$ and $m$ when $m \le n$, then the formula for (a) holds.

   For (b), fix $m$, we prove by induction on $d$.

   When $d=1$, applying the formula for (a), one has \[V_m \xx V_n=
        V_{\lvert m-n \rvert} \oplus V_{\lvert m-n+2 \rvert} \oplus \cdots \oplus V_{\lvert m+n-2 \rvert}\oplus V_{\lvert m+n \rvert}=V_{m-1}\xx V_{n-1}\oplus V_{p-1}.\]

    Suppose the formula for (b) holds up to $d=k$.

    When $d=k+1$, then \begin{align*}
        V_m \xx V_n &=V_m \xx (V_{n-1} \xx V_1 - V_{n-2})\\
        &=((d-1)V_{p-1}\oplus V_{m-(d-1)} \xx V_{n-d})\xx V_1-((d-2)V_{p-1} \oplus V_{m-(d-2)}\xx V_{n-d})    \\
        &=(2d-2)V_{p-1} \oplus V_{(m-d+1)+1}\xx V_{n-d}\oplus V_{(m-d+1)-1}\xx V_{n-d}\\ &-((d-2)V_{p-1} \oplus V_{m-(d-2)}\xx V_{n-d})\\
        &=dV_{p-1}\oplus V_{m-d}\xx V_{n-d}.
    \end{align*}
    
\end{proof}

\section{Some observations on regular representations} \label{appendix: regular}

Let $G$ be a finite group. By \cite[Exercise 2.37]{fulton2013representation}, we have the following proposition. 
\begin{prop} \label{faithproduceseverything}
Let $\rho: G \to \GL(V) $ be a faithful representation over $\C$. Then every irreducible representation of $G$ is a direct summand of $V^{\xx d}$ for some $d$ depending on the irreducible representation.    
\end{prop}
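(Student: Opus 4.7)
The plan is to use the classical Burnside--Brauer argument: a Vandermonde system in the character values. Let $\chi$ denote the character of $V$ and $\psi$ the character of a given irreducible representation $W$ of $G$. Since $\langle \chi^d, \psi \rangle$ equals the multiplicity of $W$ in $V^{\xx d}$, it suffices to show that $\langle \chi^d, \psi \rangle > 0$ for some $d \ge 1$.

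List the distinct values of $\chi$ on $G$ as $a_1, \ldots, a_m$, and partition $G$ into level sets $S_j = \{g \in G : \chi(g) = a_j\}$; label so that $a_{j_0} = \chi(e) = \dim V$. Set $b_j = \sum_{g \in S_j} \overline{\psi(g)}$. Suppose for contradiction that $\langle \chi^n, \psi \rangle = 0$ for every $n = 1, 2, \ldots, m$. Rewriting each inner product as a sum over the level sets gives the linear system
\[ \sum_{j=1}^m a_j^n \, b_j = 0 \quad (n = 1, 2, \ldots, m). \]
The coefficient matrix $(a_j^n)$ factors as the Vandermonde matrix $(a_j^{n-1})$ (invertible since the $a_j$ are distinct) times the diagonal matrix $\diag(a_1, \ldots, a_m)$, so inverting the Vandermonde factor yields $a_j b_j = 0$ for all $j$.

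It remains to derive a contradiction, which is where faithfulness enters. For any $g \in G$, the eigenvalues of $\rho(g)$ are roots of unity, so $\chi(g)$ is a sum of $\dim V$ complex numbers of absolute value $1$; the equality $\chi(g) = \dim V$ therefore forces $\rho(g) = I$, and faithfulness of $\rho$ then forces $g = e$. Hence $S_{j_0} = \{e\}$, giving $b_{j_0} = \overline{\psi(e)} = \dim W \neq 0$, whereas $a_{j_0} = \dim V \neq 0$, contradicting $a_{j_0} b_{j_0} = 0$. Thus some $d$ with $1 \le d \le m$ satisfies $\langle \chi^d, \psi \rangle > 0$, which is what was needed. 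The only step that genuinely uses the hypothesis is this last identification $S_{j_0} = \{e\}$; the rest is orthogonality of characters together with linear algebra, and no real obstacle arises.
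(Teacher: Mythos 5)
Your argument is correct: it is the standard Burnside--Brauer proof (Vandermonde system in the distinct character values, with faithfulness used only to identify the level set of $\chi(g)=\dim V$ with $\{e\}$), which is precisely the content of the exercise in Fulton--Harris that the paper cites without proof. No gaps; the handling of the possible value $a_j=0$ is fine since you only need $b_{j_0}=0$ at the index where $a_{j_0}=\dim V\neq 0$.
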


\begin{lemma} \label{producesregularrep}
    Suppose $V$ is a faithful representation of $G$. Let $\1$ be the trivial representation of $G$. Then there exists some integer $N$ such that $(\mathbbm{1} \oplus V)^k$ contains the regular representation as a direct summand for all $k \ge N$.
\end{lemma}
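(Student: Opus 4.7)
The plan is to expand $(\mathbbm{1}\oplus V)^{\otimes k}$ by distributivity of tensor over direct sum and then locate each irreducible of $G$ inside a tensor power of $V$ using Proposition \ref{faithproduceseverything}. Choosing either $\mathbbm{1}$ or $V$ in each of the $k$ tensor slots yields
\[
(\mathbbm{1}\oplus V)^{\otimes k}\;\cong\;\bigoplus_{j=0}^{k}\binom{k}{j}\,V^{\otimes j},
\]
so $V^{\otimes j}$ is a direct summand of $(\mathbbm{1}\oplus V)^{\otimes k}$ with multiplicity $\binom{k}{j}$ for $0\le j\le k$. Since $G$ is finite and we work over $\C$, there are finitely many irreducible representations $W_1,\ldots,W_r$, and the regular representation decomposes as $\bigoplus_{i=1}^{r}(\dim W_i)\,W_i$.

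By Proposition \ref{faithproduceseverything}, for each $i$ there exists an integer $d_i\ge 0$ with $W_i$ a direct summand of $V^{\otimes d_i}$. Combining this with the expansion above, $W_i$ appears in $(\mathbbm{1}\oplus V)^{\otimes k}$ with multiplicity at least $\binom{k}{d_i}$ whenever $k\ge d_i$. Since $\binom{k}{d_i}$ grows without bound in $k$ when $d_i\ge 1$, and equals $1=\dim\mathbbm{1}$ in the one case $d_i=0$ (namely $W_i=\mathbbm{1}$), I can pick $N_0$ large enough that $\binom{N_0}{d_i}\ge \dim W_i$ holds simultaneously for every $i=1,\ldots,r$. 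For this $N_0$, the representation $(\mathbbm{1}\oplus V)^{\otimes N_0}$ contains the regular representation as a direct summand.

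It remains to upgrade from a single $N_0$ to all $k\ge N_0$. The key observation is that $\mathbbm{1}$ is itself a direct summand of $\mathbbm{1}\oplus V$, hence of $(\mathbbm{1}\oplus V)^{\otimes(k-N_0)}$ for every $k\ge N_0$; writing
\[
(\mathbbm{1}\oplus V)^{\otimes k}\;=\;(\mathbbm{1}\oplus V)^{\otimes N_0}\otimes(\mathbbm{1}\oplus V)^{\otimes(k-N_0)}
\]
and tensoring the regular-representation summand of the first factor with the trivial summand of the second shows that the regular representation persists as a direct summand of $(\mathbbm{1}\oplus V)^{\otimes k}$ for every $k\ge N_0$. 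Thus $N=N_0$ works. The only substantive input is Proposition \ref{faithproduceseverything}; beyond that, the one step deserving care is this monotonicity argument, which is itself short.
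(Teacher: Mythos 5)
Your proof is correct and follows essentially the same route as the paper: expand $(\mathbbm{1}\oplus V)^{\otimes k}$ by distributivity, invoke Proposition \ref{faithproduceseverything} to place each irreducible in some $V^{\otimes d_i}$, and use the binomial multiplicities to accumulate each $W_i$ at least $\dim W_i$ times. Your treatment is in fact slightly more careful than the paper's (explicit multiplicity count, explicit handling of the passage from one $N_0$ to all $k\ge N_0$ via the persistent trivial summand), but the underlying idea is identical.
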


\begin{proof}
    Let $V_0, V_1, V_2, \cdots, V_n$ be irreducible representations of $G$ up to isomorphism. Let $V_0$ be the trivial representation.
    Let $n_i$ be a number such that $V^{\xx n_i}$ contains $V_i$ as a direct summand. Then $V^{\xx n_i+l n_0}$ contains $V_i$ as a direct summand for any nonnegative integer $l$. 
    
    Since $$(\1 \oplus V)^{\xx N}=\1 \oplus a_1 V \oplus a_2V^{\xx 2} \oplus \cdots \oplus a_{N-1}V^{\xx N-1} \oplus V^{\xx N}$$ where $a_i= \binom{N}{i} \neq 0$ for any $i$, there exists some integer $N$ such that $(\1 \oplus V)^N$ contains a regular representation. Therefore, $(\1 \oplus V)^{\xx k}$ contains a regular representation as a direct summand for all $k>N$.
\end{proof}

\begin{lemma}
    Let $V$ be a representation of $G$ of degree $d$ and $V_G$ be the regular representation of $G$. Then $V\xx _{\C} V_G \cong V_G^{\oplus d}$.
\end{lemma}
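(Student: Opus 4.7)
The plan is to prove this standard fact via character theory, which gives the cleanest argument over $\C$. First I would recall that the character of the regular representation satisfies $\chi_{V_G}(e) = |G|$ and $\chi_{V_G}(g) = 0$ for $g \ne e$, since $g \ne e$ permutes the standard basis $\{e_h\}_{h \in G}$ of $\C[G]$ without fixed points. Next I would invoke the multiplicativity of characters under tensor products: $\chi_{V \otimes V_G}(g) = \chi_V(g)\,\chi_{V_G}(g)$. For $g \ne e$ this vanishes because $\chi_{V_G}(g) = 0$, and for $g = e$ it equals $d \cdot |G|$. Hence $\chi_{V \otimes V_G} = d \cdot \chi_{V_G} = \chi_{V_G^{\oplus d}}$, and since representations of a finite group over $\C$ are determined by their characters, one concludes $V \otimes V_G \cong V_G^{\oplus d}$.

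There is no real obstacle here — this is a textbook calculation — but if one prefers a basis-free, constructive proof, an alternative is to exhibit the isomorphism explicitly. Let $V^{\mathrm{triv}}$ denote the $d$-dimensional vector space $V$ equipped with the trivial $G$-action, and define
\[
\phi : V \otimes V_G \longrightarrow V^{\mathrm{triv}} \otimes V_G, \qquad \phi(v \otimes e_g) = g^{-1}v \otimes e_g.
\]
A direct check shows that $\phi$ intertwines the diagonal action $h \cdot (v \otimes e_g) = hv \otimes e_{hg}$ on the source with the action $h \cdot (v \otimes e_g) = v \otimes e_{hg}$ on the target: indeed $\phi(hv \otimes e_{hg}) = (hg)^{-1}hv \otimes e_{hg} = g^{-1}v \otimes e_{hg}$, which matches $h \cdot \phi(v \otimes e_g)$. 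The inverse is $v \otimes e_g \mapsto gv \otimes e_g$, so $\phi$ is an isomorphism. Finally, because the $G$-action on the first tensor factor of $V^{\mathrm{triv}} \otimes V_G$ is trivial, one has $V^{\mathrm{triv}} \otimes V_G \cong \C^d \otimes V_G \cong V_G^{\oplus d}$, and the lemma follows.
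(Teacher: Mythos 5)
Your character-theoretic argument is exactly the proof the paper gives: $\chi_{V\otimes V_G}$ vanishes off the identity, equals $d|G|$ at the identity, hence equals $d\chi_{V_G}$, and characters determine complex representations of finite groups. Your alternative "untwisting" isomorphism $v\otimes e_g\mapsto g^{-1}v\otimes e_g$ is also correct (and has the minor advantage of working over any field), but it is not needed here.
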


\begin{proof}
    Let $\chi$ be the character of $V\xx _{\C} V_G$. Then $\chi(g)=0$ when $g\neq e$ and $\chi(e)=d \vert G \vert $. Hence, $\chi=d\chi_G$ where $\chi_G$ is the character of the regular representation of $G$. Since a representation is determined by its character in the \ch\ zero case, the result follows. 
\end{proof}

\begin{remark} \label{tensorwithreg}
    Since $V_G$ contains exactly one trivial representation, for any $G$-representation $V$ of degree $d$, we have $\TS_G(V\xx V_G)=\TS_G\bigl(V_G^{\oplus d}\bigr)=d$.
\end{remark}

\end{appendices}

\printbibliography
\end{document}